\newtheorem{theorem}{Theorem}
\newtheorem{remark}[theorem]{Remark}
\begin{document}

\title{EXPONENTIAL ROSENBROCK METHODS WITHOUT ORDER REDUCTION WHEN INTEGRATING NONLINEAR INITIAL BOUNDARY VALUE PROBLEMS}

\author{Bego\~na Cano
\thanks{Departamento de  Matem\'atica Aplicada, IMUVA, Universidad de Valladolid,  Spain \\ Email: bcano@uva.es}
\and
Mar\'\i a Jes\'us Moreta
\thanks{Departamento de An\'alisis Econ\'omico y Econom\'\i a Cuantitativa,  IMUVA, Universidad Complutense de Madrid, Spain\\ Email: mjesusmoreta@ccee.ucm.es}}

\date{}

\maketitle

\begin{abstract}

A technique is described in this paper to avoid order reduction when integrating reaction-diffusion initial boundary value problems with explicit exponential Rosenbrock methods. The technique is valid for any Rosenbrock method, without having to impose any stiff order conditions, and for general time-dependent boundary values. An analysis on the global error is thoroughly performed and some numerical experiments are shown which corroborate the theoretical results, and in which a big gain in efficiency with respect to applying the standard method of lines can be observed.

\end{abstract}




\section{Introduction}
\label{Introduction}

Rosenbrock methods are an efficient tool to integrate nonlinear stiff differential systems when information on the Jacobian of the vector field which defines the differential system is available \cite{La}. In the case of standard methods, it allows to achieve a stable integration through just a linearly implicit integration, i.e. without resorting to a nonlinear implicit implementation. In the case of exponential methods, where the integration of the linearized  and stiff part of the vector field is performed in an \lq exact' way, it provides a stable and \lq explicit' way of approximating the solution of the system. In fact, for the calculation of the exponential-type functions of scaled Jacobians applied over vectors, iterative procedures are required. However, these can be cheaper than solving linear systems in standard Rosenbrock methods, mainly when the matrices are sparse and there is no good known preconditioner for the latter \cite{T}. Moreover, as an advantage with respect to explicit exponential Runge-Kutta methods  \cite{HO} (denoted by EERK in the rest of the paper), the fact that the Jacobian is known at each step allows to achieve methods with a desired accuracy with less stages. The use of this type of methods in practical problems is justified through the literature \cite{G,LM,LPR,T,T2}.

In the numerical integration of initial boundary value problems, Rosenbrock exponential methods suffer from order reduction, as well as other exponential methods. When considering vanishing boundary conditions, an analysis has been performed in \cite{HOS} and stiff order conditions are given there on the coefficients of the method so that a desired accuracy is achieved. In the present paper, similarly to what has been suggested for other exponential methods in nonlinear problems \cite{acrnl,cm_erk,CR}, we describe a technique to avoid order reduction with any exponential Rosenbrock method without having to impose those stiff order conditions. Moreover, both the technique and the theoretical results are valid for general time-dependent boundary conditions, without having to impose any condition of annhilation on the boundary neither having to reduce the problem to one with nul boundary values.

The suggested technique consists of discretizing firstly in time, by substituting the exponentials of operators applied over functions by initial boundary value problems for which suitable boundaries must be proposed. The analysis is performed in an abstract framework of Banach spaces. Then, a space of continuous functions on a certain bounded domain together with the supremum norm is chosen for a quite general space discretization, which is assumed to be performed over the mentioned intermediate initial boundary value problems. In this paper, for the sake of brevity, we state the results on the local error without proofs because the latter are quite similar to those for EERK methods in \cite{cm_erk} except for the fact that some assumptions are slightly different because the linear and stiff part of the problem is now substituted by the linearization at each step. On the other hand, when trying to get local order 2, 3 and 4, a simplification of the suggested boundaries is given so as to calculate them as easily as possible without losing order. A through analysis of this inside the full discretization error is given in the paper, for which a proof on the global error is also shown, taking into account that the discretization matrices of the Jacobian change at each step. Therefore, although the final conclusions are similar to those of EERKs, the analysis changes significantly with respect to that case.

The structure of the paper is as follows. Section 2 gives some preliminaries on the required hypotheses on the abstract framework (which are a modification of those in \cite{cm_erk}) and on how explicit Rosenbrock methods integrate non-autonomous problems \cite{HOS}. Then, the suggested time semidiscretization is described in Section 3. Section 4 states the modified hypotheses on the space discretization and describes the full discretization formulas. Then, in order to calculate the required boundary values in terms of data without losing order, a discussion is performed on when it is necessary to resort to numerical differentiation either in space or in time, and with both Dirichlet and Robin/Neumann boundary conditions. Then the result on the global error is stated and proved considering that. Finally, in Section 5, some numerical experiments are shown which corroborate that order reduction is avoided and, what is more important, that this is done saving computational time. The comparison with other methods in the literature which have been constructed with a high enough stiff order  will be shown in a forthcoming paper \cite{CM2}.

\section{Preliminaries}
\label{Preliminares}
For a precise analysis, we assume that the problem to integrate is
\begin{eqnarray}
\label{laibvp}
\begin{array}{rcl}
u'(t)&=&Au(t)+f(t,u(t)), \quad  0\le t \le T,\\
u(0)&=&u_0 \in X,\\
\partial u(t)&=&g(t)\in Y, \quad  0\le t \le T.
\end{array}
\end{eqnarray}
where  $A:D(A)\subset X \to X$ and
$\partial: X \to Y$ are linear operators and $X$ and $Y$ are Banach spaces.  We will assume the following hypotheses, which are very similar to those in \cite{cm_erk}, where order reduction was avoided in the same type of problems with explicit exponential Runge-Kutta methods. The difference comes in assumptions (A6) and (A9).
\begin{enumerate}
\item[(A1)] The boundary operator $\partial:D(A)\subset X\to Y$ is
onto and $g\in C^1([0,T],Y)$.

\item[(A2)] Denoting $A_0:D(A_0)=Ker(\partial)\subset X \to X$, the restriction of $A$
to Ker($\partial$), at least one of these assumptions is satisfied:
\begin{enumerate}
\item
Ker($\partial$) is dense in $X$ and $A_0$ is the infinitesimal generator of a $C_0$-
semigroup $\{e^{\tau A_0}\}_{\tau \ge 0}$ in $X$ of negative type $\omega$.
\item
$D(A)$ is dense in $X$ and $A_0$ generates a bounded holomorphic semigroup $\{e^{\tau A_0}\}_{\tau \ge 0}$ in $X$ of negative type $\omega$.
\end{enumerate}
\item[(A3)] If $z \in \mathbb{C}$ satisfies ${\mathcal Re} (z) >\omega$ and $v
\in Y$, then the steady state problem
\begin{eqnarray}
 Ax &=& zx,  \nonumber \\
 \partial x&=&v,
\nonumber
\end{eqnarray}
possesses a unique solution denoted by $x=K(z)v$. Moreover, the
linear operator $K(z): Y \to D(A)$ satisfies
\begin{eqnarray}
\label{stationaryoperator} \| K(z)v\| \le C\|v\|,
\end{eqnarray}
where the constant $C$ holds for any $z$ such that $Re (z) \ge
\omega_0 > \omega$.

\item[(A4)] The nonlinear source $f$ belongs to $C^1([0,T] \times
X, X)$.

\item[(A5)] The solution $u$ of (\ref{laibvp}) satisfies $u\in
C^1([0,T], X)$, $u(t) \in D(A)$ for all $t \in [0,T]$ and $Au \in C([0,T], X)$.

\end{enumerate}

As justified in \cite{acrnl}, problem (\ref{laibvp}) is well-posed because of (A1)-(A4) although (A4) is quite restrictive if $X=L^p(\Omega)$  with $\Omega$ a bounded domain in $\mathbb{R}^d$. However, if the supremum norm is chosen, (A4) is satisfied whenever  $f$ has the form
\begin{eqnarray}
\label{nonlinearterm} f(t,u)= \Psi (u) + h(t),
\end{eqnarray}
with $\Psi\in C^1(\mathbb{C},\mathbb{C})$ and  $h\in C^1([0,T], X)$.
For simplicity, we will assume from now on that $f$ has the form (\ref{nonlinearterm}) and we will consider the following one-parameter family of operators, which will play an essential role in exponential Rosenbrock methods:
\begin{eqnarray}
\bar{J}(t)=A+\Psi'(u(t))I, \quad t \in [0,T],
\nonumber
\end{eqnarray}
where $u$ is the solution of (\ref{laibvp}). We will assume that
\begin{enumerate}
\item[(A6)] For a certain real value $\omega'>\omega$, ${\mathcal Re} (\Psi'(u(t)))<-\omega'$ for every $t\in [0,T]$ so that
\begin{eqnarray}
\bar{J}_0(t)=A_0+\Psi'(u(t)) I
\label{bj0}
\end{eqnarray}
satisfies either (A2a) or (A2b) for the negative type $\omega-\omega'$. Moreover, in such a way, $\bar{J}_0(t)$ is invertible and $\|\bar{J}_0(t)^{-1}\|$ is uniformly bounded in $[0,T]$.
\item[(A7)] There exists a natural value $\bar{m}(A)\ge 1$ such that,  whenever $w\in D(A^l)$  and $\Psi \in C^{m+ \bar{m}(A)l}(\mathbb{C},\mathbb{C})$ for natural $l$ and $m$, $\Psi^{(m)}(w)\in D(A^l)$. We notice that this implies that $\Psi^{(m)}(w)\in D(\bar{J}(t)^l)$ for every $t\in [0,T]$ if $u(t)\in D(A^{l-1})$.

\item[(A8)] For every natural $l\ge 0$, there exists a norm $\|\cdot \|_l$ in $D(A^l)\subset X$ such that, for every natural $m\ge 1$ and $u,v\in D(A^l)$, whenever $\Psi \in C^{m+ \bar{m}(A)l}(\mathbb{C},\mathbb{C})$, $\Psi^{(m)}(u) v^m \in D(A^l)$ and
    $$
    \|A^l \big[\Psi^{(m)}(u) v^m\big]\| \le C(\Psi,u) \|v\|_l^m,
    $$
    for some constant $C(\Psi,u)$ which depends on $\Psi^{(m)}(u),\dots,\Psi^{(m+ \bar{m}(A)l)}(u)$ and $\|u\|_l$. We also notice that this implies that
    $$
    \|\bar{J}(t)^l \big[\Psi^{(m)}(u) v^m\big]\| \le C'(\Psi,u) \|v\|_l^m, \quad \mbox{ for every }t\in [0,T],
    $$
    for some other constant which depends on the same terms.
\item[(A9)] For every natural $l\ge 1$ and the norm $\|\cdot \|_l$ in (A7), for every $u,v\in D(A^l)$, whenever $\Psi \in C^{1+ \bar{m}(A)l}(\mathbb{C},\mathbb{C})$,
    $$
    \|\bar{J}_0^{-1}(t) A^l \big[\Psi'(u)v\big]\| \le \bar{C}(\Psi,u) \|v\|_{l-1},
    $$
    for some constant $\bar{C}(\Psi,u)$ which depends on $\Psi'(u),\dots,\Psi^{(1+ \bar{m}(A)l)}(u)$ and $\|u\|_l$.
\end{enumerate}

Because of hypothesis (A6), $\{\varphi_j(\tau \bar{J}_0(t))\}_{j=0}^{\infty}$   are
bounded operators for $\tau>0$ and $t\in [0,T]$, where $\{\varphi_j\}$ are the
standard functions being used in exponential methods \cite{HO2}, i.e.,
\begin{eqnarray}
\varphi_j( \tau \bar{J}_0(t))=\frac{1}{\tau^j} \int_0^{\tau}
e^{(\tau-\sigma)\bar{J}_{0}(t)}
\frac{\sigma^{j-1}}{(j-1)!}d\sigma, \quad j \ge 1.
\label{varphi}
\end{eqnarray}

In this paper, we will integrate these problems in time with explicit exponential Rosenbrock methods \cite{HOS} which, when applied to an autonomous
finite-dimensional nonlinear problem
\begin{eqnarray}
U'(t) =F(U(t)), \label{linfd}
\end{eqnarray}
advance like this from the numerical solution $U_n\approx U(t_n)$ to the next step $U_{n+1}\approx U(t_n+k)$:
\begin{eqnarray}
K_{n,i}&=&e^{c_i k J_n}U_n+k \sum_{j=1}^{i-1} a_{ij}(k J_n) G_n(K_{n,j}), \quad i=1,\dots,s, \label{etapas} \\
U_{n+1}&=&e^{k J_n}U_n+k \sum_{i=1}^s b_i(k J_n) G_n(K_{n,i}),
\label{eerk}
\end{eqnarray}
where
\begin{eqnarray}
J_n=F'(U_n), \quad G_n(U)=F(U)-J_n U.
\label{jg}
\end{eqnarray}
Moreover, the coefficients
$a_{ij}$ and $b_i$ use to be linear combinations of the functions  $\varphi_l$ in (\ref{varphi}). More precisely, for the values $\{c_i\}$ in (\ref{etapas}), we will assume that
\begin{eqnarray}
a_{ij}(z)&=&\sum_{l=1}^r \lambda_{i,j,l} \varphi_l( c_i z), \label{a} \\
b_i(z)&=& \sum_{l=1}^r \mu_{i,l} \varphi_l(z), \nonumber
\end{eqnarray}
for some constants $\lambda_{i,j,l}$ and $\mu_{i,l}$.

On the other hand, when the problem is non-autonomous, i.e, $U'(t) =F(t,U(t))$,
the formulas to be implemented \cite{HOS} come from considering the corresponding equivalent autonomous problem
\begin{eqnarray}
t'&=&1, \nonumber \\
U'&=&F(t,U). \label{lindfd}
\end{eqnarray}
For this problem, we will denote as $\tilde{J}_n$ the corresponding Jacobian in (\ref{jg}), so that
$$
\tilde{J}_n=\left( \begin{array}{cc} 0 & 0 \\ V_n & J_n \end{array} \right), \quad V_n=\frac{\partial F}{\partial t}(t_n,U_n),
\quad J_n=\frac{\partial F}{\partial U}(t_n,U_n).
$$
Then, considering Lemma 1 in \cite{CO}, which states that
$$\varphi_l\left(\begin{array}{cc} 0 & 0 \\ w & J \end{array}\right)=\left (\begin{array}{cc} \varphi_l(0) & 0 \\ \varphi_{l+1}(J)w & \varphi_l(J) \end{array}\right),$$
and assuming the standard conditions
\begin{eqnarray}
\sum_{j=1}^{i-1} a_{i,j}(0)=c_i, \quad \sum_{i=1}^s b_i(0)=1,
\label{scond}
\end{eqnarray}
the stages in (\ref{etapas}) convert to
\begin{eqnarray}
t_{n,i}&=& t_n+ c_i k, \nonumber \\
U_{n,i}&=&e^{c_i k J_n}U_n+c_i k t_n \varphi_1(c_i k J_n)V_n  \nonumber \\
&&+k\sum_{j=1}^{i-1} \sum_{l=1}^r \lambda_{i,j,l} \bigg[\varphi_l(c_i k J_n)[F(t_{n,j},U_{n,j})-t_{n,j}V_n-J_n U_{n,j}]
\nonumber \\
&&\hspace{3cm}+c_i k \varphi_{l+1}(c_i k J_n) V_n\bigg], \label{stages}
\end{eqnarray}
and the numerical solution from one step to another advances like
\begin{eqnarray}
t_{n+1}&=& t_n+k, \nonumber \\
U_{n+1}&=&e^{k J_n}U_n+k t_n \varphi_1(k J_n) V_n \nonumber  \\
&&+k \sum_{i=1}^s \sum_{l=1}^r \mu_{i,l}\bigg[\varphi_l(k J_n)[F(t_{n,i},U_{n,i})-t_{n,i}V_n-J_n U_{n,i}]\nonumber \\
&& \hspace{3cm}
+k \varphi_{l+1}(k J_n)V_n\bigg]. \label{un}
\end{eqnarray}

\section{Suggestion for the time semidiscretization}

 As we are interested in integrating (\ref{laibvp}) with $f$ in (\ref{nonlinearterm}) instead of the ordinary differential system (\ref{lindfd}), we suggest to substitute each of the exponential-type matrix functions in (\ref{stages})-(\ref{un}) applied over vectors by the solution of some appropriate initial boundary value problem. More precisely, if we denote by $\phi_{0,q,\alpha,\beta,\gamma}(\tau)$ the solution of
\begin{eqnarray}
v'(\tau)&=& (A+\Psi'(\gamma)) v(\tau), \nonumber \\
v(0)&=& \alpha, \nonumber \\
\partial v (\tau) &=& \partial \bigg(\sum_{l=0}^q \frac{\tau^l}{l!} (A+\Psi'(\gamma))^l \beta\bigg), \label{phi0}
\end{eqnarray}
whenever $\alpha\in X$, $\gamma \in D(A^q)$, $\beta \in D(A^{q+1})$ and $\Psi\in C^{1+q \bar{m}(A)}$, and by
$\phi_{j,q,\alpha,\beta,\gamma}(\tau)$ ($j \ge 1$) the solution of
\begin{eqnarray}
v'(\tau)&=& (A+\Psi'(\gamma)-\frac{j}{\tau})v(\tau)+\frac{1}{(j-1)!\tau}\alpha, \nonumber \\
v(0)&=& \frac{1}{j!}\alpha, \nonumber \\
\partial v (\tau) &=& \partial \bigg(\sum_{l=0}^q \frac{\tau^l}{(l+j)!} (A+\Psi'(\gamma))^l \beta\bigg), \label{phij}
\end{eqnarray}
under the same assumptions, we suggest to approximate the solution of (\ref{laibvp}) through the following formulas when the method has non-stiff order $\ge p$: We consider as stages
\begin{eqnarray}
\lefteqn{\hspace{-0.4cm} K_{n,i}= \phi_{0,p-1,u_n,u(t_n),u_n}(c_i k)+ c_i k t_n \phi_{1,p-2,\dot{h}(t_n),\dot{h}(t_n),u_n}(c_i k)} \nonumber  \\
&&\hspace{-0.9cm}+k \sum_{j=1}^{i-1} \sum_{l=1}^r \lambda_{i,j,l} \bigg[ \phi_{l,p-2,G_{n,j}, \bar{G}_{n,j},u_n}(c_i k)
+c_i k \phi_{l+1,p-3,\dot{h}(t_n),\dot{h}(t_n),u_n}(c_i k)\bigg], \label{etsro}
\end{eqnarray}
where
\begin{eqnarray}
G_{n,j}&=&\Psi(K_{n,j})+h(t_{n,j})-t_{n,j} \dot{h}(t_n)-\Psi'(u_n)K_{n,j}, \label{gnj}\\ \bar{G}_{n,j}&=&\Psi(\bar{K}_{n,j})+h(t_{n,j})-t_{n,j} \dot{h}(t_n)-\Psi'(u(t_n))\bar{K}_{n,j},\label{gnbj}
\end{eqnarray}
 with
\begin{eqnarray}
\bar{K}_{n,i}&=& \phi_{0,p-1,u(t_n),u(t_n),u(t_n)}(c_i k)+ c_i k t_n \phi_{1,p-2,\dot{h}(t_n),\dot{h}(t_n),u(t_n)}(c_i k) \nonumber \\
&&\hspace{-0.5cm} +k \sum_{j=1}^{i-1} \sum_{l=1}^r \lambda_{i,j,l} \bigg[ \phi_{l,p-2,\bar{G}_{n,j}, \bar{G}_{n,j},u(t_n)}(c_i k)
+c_i k \phi_{l+1,p-3,\dot{h}(t_n),\dot{h}(t_n),u(t_n)}(c_i k)\bigg]. \label{bkni}
\end{eqnarray}
Then, as the numerical approximation at the next step, we suggest
\begin{eqnarray}
u_{n+1}&=&\phi_{0,p,u_n,u(t_n),u(t_n)}(k)+ k t_n \phi_{1,p-1,\dot{h}(t_n),\dot{h}(t_n),u(t_n)}(k)  \nonumber \\
&&+k \sum_{i=1}^s \sum_{l=1}^r \mu_{i,l}\big[ \phi_{l,p-1,G_{n,i},\bar{G}_{n,i},u(t_n)}(k)+k \phi_{l+1,p-2,\dot{h}(t_n),\dot{h}(t_n),u(t_n)}(k)\big]. \label{usro}
\end{eqnarray}

\subsection{Local error}
\label{localerror}
With a similar proof to that in \cite{cm_erk}, the following result can be stated about the local error when integrating (\ref{laibvp}) through (\ref{etsro}),(\ref{usro}). More precisely, in the following theorem  $\rho_n=\bar{u}_{n+1}-u(t_{n+1})$ where $\bar{u}_{n+1}$ is also defined through (\ref{etsro}),(\ref{usro}) but substituting $u_n$ by $u(t_n)$.

\begin{theorem}
Under hypotheses (A1)-(A9), if the explicit exponential Rosenbrock method (\ref{etapas})-(\ref{eerk}) has non-stiff order $\ge p$, when integrating (\ref{laibvp}) through (\ref{etsro}),(\ref{usro}) with
$u\in C([0,T], D(A^{p+1}))\cap C^{p+1}([0,T],X)$, $\Psi\in C^{1+p \bar{m}(A)}(\mathbb{C},\mathbb{C})$,  $h, \dot{h}\in C^{p}([0,T],D(A^p)),$
it happens that the local error satisfies $\|\rho_n\|=O(k^{p+1})$.

Moreover, if in formulas (\ref{etsro}),(\ref{usro}), we  substitute $p$ by $\hat{p}$ with $1\le \hat{p}\le p-1$, whenever $u\in C([0,T], D(A^{\hat{p}+2}))\cap C^{\hat{p}+2}([0,T],X)$, $\Psi\in C^{1+(\hat{p}+1)\bar{m}(A)}(\mathbb{C},\mathbb{C})$ and $h,\dot{h}\in C^{\hat{p}+1}([0,T],D(A^{\hat{p}+1}))$, it happens that not only the local error satisfies $\rho_n=O(k^{\hat{p}+1})$ but also $\|\bar{J}_0^{-1}(t_n) \rho_n\|=O(k^{\hat{p}+2})$, with $\bar{J}_0(t)$ in (\ref{bj0}).
\label{thloc}
\end{theorem}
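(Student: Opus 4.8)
The plan is to estimate the local error by comparing the exact solution $u(t_{n+1})$ with the numerical step $\bar u_{n+1}$ built from $u(t_n)$, exploiting the fact that the intermediate initial boundary value problems $\phi_{j,q,\alpha,\beta,\gamma}$ have been designed precisely so that their boundary data match the Taylor expansion of the corresponding exponential-operator term. The key observation is that $\phi_{j,q,\alpha,\beta,\gamma}(\tau)$ differs from the ideal object $\varphi_j(\tau\bar J_0(t_n))\alpha$ (plus boundary corrections) only by the tail of a Taylor series in the boundary: because $\partial v$ is prescribed as $\partial\big(\sum_{l=0}^q \frac{\tau^l}{(l+j)!}(A+\Psi'(\gamma))^l\beta\big)$, the defect enters at order $\tau^{q+1}$ through $(A+\Psi'(\gamma))^{q+1}\beta$. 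So the first step is to write, for each building block, an identity of the form $\phi_{j,q,\alpha,\beta,\gamma}(\tau) = (\text{full-operator term}) + (\text{remainder})$ and to bound the remainder using hypotheses (A3), (A6) and the Taylor-with-integral-remainder formula, controlling $(A+\Psi')^{q+1}\beta$ via the regularity $\beta\in D(A^{q+1})$.

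Next I would reduce the comparison between $\bar u_{n+1}$ and $u(t_{n+1})$ to two pieces. First, since the underlying exponential Rosenbrock method (\ref{etapas})--(\ref{eerk}) has non-stiff order $\ge p$, the \emph{formal} method applied with exact exponentials (i.e. replacing every $\phi$ by its full-operator counterpart) reproduces $u(t_{n+1})$ up to $O(k^{p+1})$; this is the classical non-stiff order argument carried out in the abstract Banach-space setting, using the Taylor expansion of $u$ guaranteed by $u\in C^{p+1}([0,T],X)\cap C([0,T],D(A^{p+1}))$ together with (A5), (A7), (A8). Second, the accumulated contribution of all the Taylor-tail remainders from the first step must be shown to be $O(k^{p+1})$. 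Here the choice of truncation indices in (\ref{etsro})--(\ref{usro}) (namely $p-1,\ p-2,\ p-3$ for the stages and $p,\ p-1,\ p-2$ for the update) is exactly calibrated so that a block multiplied by an explicit factor $k$ or $c_i k$ may be truncated one order earlier without degrading the overall order; I would verify this bookkeeping term by term, using (A8) to bound the $A^l$ of the nonlinear quantities $G_{n,i},\bar G_{n,i}$ in terms of the $\|\cdot\|_l$ norms.

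For the sharper estimate $\|\bar J_0^{-1}(t_n)\rho_n\| = O(k^{\hat p+2})$ under the truncation at $\hat p$, the plan is to exploit hypothesis (A9): premultiplying the leading remainder term by $\bar J_0^{-1}(t_n)$ converts an $A^{\hat p+1}$-type bound into an $A^{\hat p}$-type bound, i.e. gains one factor through the smoothing $\|\bar J_0^{-1}A^l[\Psi'(u)v]\|\le \bar C\|v\|_{l-1}$. Concretely, the dominant $O(k^{\hat p+1})$ part of $\rho_n$ is a boundary-defect term whose structure is of the form $\bar J_0^{-1}$ acting on $A^{\hat p+1}$ applied to a smooth quantity; (A9) then yields an extra power of $k$. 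One therefore needs the additional regularity $u\in C([0,T],D(A^{\hat p+2}))\cap C^{\hat p+2}([0,T],X)$ and $h,\dot h\in C^{\hat p+1}([0,T],D(A^{\hat p+1}))$ so that all quantities to which (A9) is applied lie in the correct domain.

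The main obstacle I anticipate is the careful bookkeeping in the second step: tracking, for every stage block in (\ref{etsro}) and every update block in (\ref{usro}), which power of $k$ multiplies it, which truncation order $q$ it carries, and hence at what order in $k$ its Taylor-tail remainder enters, and then confirming that the prescribed indices make every contribution land at or beyond $O(k^{p+1})$ (respectively that the $\bar J_0^{-1}$-weighted version lands at $O(k^{\hat p+2})$). This is delicate because the $\phi$-blocks are coupled recursively through $K_{n,j}$ and $G_{n,j}$, so one must propagate the error estimates through the stages in the right order and control the Lipschitz dependence of $\Psi$ and its derivatives (via (A4), (A7), (A8)) on the perturbed arguments. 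As the paper notes, the structure parallels the EERK analysis in \cite{cm_erk}, the essential new feature being that the fixed linear operator there is replaced by the frozen linearization $\bar J_0(t_n)$, so the boundedness and smoothing properties (A6), (A9) of this $t_n$-dependent operator must be invoked uniformly in $n$.
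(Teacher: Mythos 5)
Your outline follows essentially the same route as the paper's: the paper states Theorem \ref{thloc} without proof, deferring to the EERK analysis in \cite{cm_erk}, and your decomposition of each $\phi_{j,q,\alpha,\beta,\gamma}(\tau)$ into the full-operator term plus an $O(\tau^{q+1})$ Taylor-tail boundary remainder is exactly the identity the paper itself records at the start of Subsection \ref{SRB}, with the truncation indices $p,p-1,p-2,p-3$ calibrated against the explicit powers of $k$ as you describe. Your use of (A9) to convert the leading $O(k^{\hat p+1})$ defect into an $O(k^{\hat p+2})$ bound after premultiplication by $\bar{J}_0^{-1}(t_n)$ is precisely the one substantive modification the paper singles out (the replacement of $A_0^{-1}$ by $\bar{J}_0^{-1}$, enabling the later summation-by-parts argument), so the proposal is consistent with the intended proof.
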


We remark that the main difference with Theorem 2 in \cite{cm_erk} is that, in the last statement, $\bar{J}_0^{-1}$ turns up instead of $A_0^{-1}$. This will allow to apply a summation-by-parts argument afterwards, so that the local error order is the same as the global one.

\section{Suggestion for the full discretization}
\label{suggestionfulldiscretization}

Before giving the final formulas for the full discretization, we must first fix a certain space $X$ and consider then space discretizations of the corresponding problems (\ref{phi0}) and (\ref{phij}) which turn up in (\ref{etsro}) and (\ref{usro}). In the same way as in \cite{acrnl,cm_erk,CR}, we take
$X=C(\overline{\Omega})$ for a certain bounded domain $\Omega\subset \mathbb{R}^d$ and the maximum norm.

We will denote by $\Omega_h$ the grid over which the solution of (\ref{laibvp}) will be approximated. We will assume that this grid has $N$ nodes and we will denote by $P_h$ the projection of a function in $X$ on its nodal values on the grid. Then, we assume that the elliptic problem
$$A u =F, \qquad \partial u=g,$$
is discretized by
\begin{eqnarray}
A_{h,0}U_h+C_h g=P_h F+ D_h \partial F,
\label{spacediscr}
\end{eqnarray}
where $U_h \in \mathbb{C}^N$ are the nodal values to be approximated, $A_{h,0}$ is the matrix which discretizes $A_0$ and $C_h, D_h: Y
\to \mathbb{C}^N$ are other operators associated to the discretization of $A$, which take into account the information on the boundary of $u$ and $F$.

In a similar way to \cite{cm_erk}, we consider the following hypotheses for the discrete maximum norm $\|\cdot\|_h$. (Notice the difference in (H1)):
\begin{enumerate}
\item[(H1)] For $U$ in a neighbourhood of the solution where the numerical approximation stays,
the matrices $A_{h,0}+\mbox{diag}{(\Psi'(U))}$ satisfy the following properties for small enough $h$:
\begin{enumerate}
\item  Imitating what comes from (A6) in the continuous case, these matrices are invertible and their inverses are uniformly bounded in $h$ and $U$.
\item In a similar way to what happens with $\{\varphi_l(\tau \bar{J}_0(t))\}$, for constants $C_l$ which are independent of $U$ and $h$,
$$\|\varphi_l(\tau (A_{h,0}+\mbox{diag}{(\Psi'(U))}))\|_h \le C_l, \quad l=0,\dots,r, \quad \tau >0.$$
What's more, imitating (A2a) and (A2b), for some constant $C$,
$$
\| e^{\tau A_{h,0}}\| \le C, \quad \tau>0.$$
\item
Imitating the continuous property (A3) for $z=-\Psi'(u(t))$ in case that $u$ were constant in space, and taking into account that ${\mathcal Re}(z)>\omega$ because of (A6),
$$\|(A_{h,0}+\mbox{diag}{(\Psi'(U))})^{-1} (C_h+D_h)\| \le  C'',$$ for some constant
$C''$ which does not depend on $h$ either on $U$.
\end{enumerate}
\item[(H2)] We define the elliptic projection $R_{h}:D(A) \to
\mathbb{C}^N$  as the solution of
\begin{eqnarray}
A_{h,0} R_{h} u+ C_{h}\partial u=P_h Au +D_h \partial A u. \label{rh}
\end{eqnarray}
\begin{enumerate}
\item[(a)] There exists a subspace $Z \subset D(A)$ such
that, for $w \in Z$,
\begin{equation}
\label{consistency}
 \left\| A_{h,0}({P_h w-R_{h}w})
\right\|_h \le \varepsilon_{h} \left\| w \right\|_Z, \quad
\left\|P_h w-R_{h} w \right\|_h \le \eta_{h} \left\| w \right\|_Z.
\end{equation}
for some $\varepsilon_{h}$ and $\eta_{h}$ which are
both small with $h$.
Moreover, this space $Z$ satisfies that, whenever $w \in Z$, for every $t \in [0,T]$,
$\bar{J}(t)_0^{-1} w \in Z$ and $e^{\tau \bar{J}_0(t)}w\in Z$ for small enough $\tau$. Besides,
there exists a natural value $\bar{m}(Z)\ge 1$ such that, whenever $w\in D(\bar{J}(t)^l)$ for natural $l$ and $\bar{J}(t)^l w\in Z$, it happens that $\bar{J}(t)^l \Psi(w)\in Z$
if $\Psi \in C^{\bar{m}(Z)+l\bar{m}(A)}(\mathbb{C},\mathbb{C})$. Even more, $\bar{J}(t)^l (\Psi'(u(t))w)$ belongs to $Z$ if
$\Psi$ belongs to $C^{\bar{m}(Z)+l\bar{m}(A)+1}(\mathbb{C},\mathbb{C})$ and $u \in D(\bar{J}(t)^l)$.

\item[(b)] $\|D_h\|_h$ is uniformly bounded on $h$.
\end{enumerate}
\item[(H3)] $\Psi'$ is uniformly bounded in a neighbourhood of the solution where the numerical approximation stays.
\end{enumerate}

In order to discretize the problems (\ref{phi0})-(\ref{phij}) corresponding to the formulas in (\ref{etsro})-(\ref{usro}), in a natural way, we propose to approximate $\gamma$ in the differential system by the nodal values of the numerical solution $U_h^n$ being calculated at each step, and to consider either $U_h^n$, $P_h \dot{h}(t_n)$ or $G_{n,j,h}$ as the initial condition $\alpha$, depending on the particular term. Imitating (\ref{gnj}), $G_{n,j,h}$ will be given by
\begin{eqnarray}
G_{n,j,h}&=&\Psi(K_{n,j,h})+P_h h(t_{n,j})-t_{n,j} P_h \dot{h}(t_n)-\mbox{diag}(\Psi'(U_h^n))K_{n,j,h},
\nonumber
\end{eqnarray}
for the full discretization of the stages $K_{n,j,h}$. (We assume in principle that all the data on the boundary, related to $\beta$ and also $\gamma$, can be exactly calculated).

After discretizing the differential operator $A$ in (\ref{phi0})-(\ref{phij}) using (\ref{spacediscr}) and solving the differential system through the variation-of-constants formula together with the definition of the functions $\varphi_j$ in (\ref{varphi}), it turns out that we approximate $K_{n,i}$ in (\ref{etsro}) by
\begin{eqnarray}
K_{n,i,h}&=& e^{c_i k J_{n,h,0}} U_h^n +\sum_{l=0}^{p-2} (c_i k)^{l+1} \varphi_{l+1}(c_i k J_{n,h,0})[C_h \partial \bar{J}(t_n)^l u(t_n)-D_h \partial \bar{J}(t_n)^{l+1} u(t_n)] \nonumber \\
&&+(c_i k)^p \varphi_p(c_i k J_{n,h,0}) C_h \partial \bar{J}(t_n)^{p-1} u(t_n)\nonumber \\
&&+c_i k t_n \bigg[\varphi_1(c_i k J_{n,h,0})P_h \dot{h}(t_n)\nonumber \\
&&\hspace{1.5cm}+\sum_{ll=0}^{p-3}(c_i k)^{ll+1} \varphi_{ll+2}(c_i k J_{n,h,0})[C_h \partial \bar{J}(t_n)^{ll} \dot{h}(t_n)-D_h \partial \bar{J}(t_n)^{ll+1} \dot{h}(t_n)] \nonumber \\
&&\hspace{1.5cm}+(c_i k)^{p-1} \varphi_p(c_i k J_{n,h,0}) C_h \partial \bar{J}(t_n)^{p-2} \dot{h}(t_n) \bigg] \nonumber \\
&&+k \sum_{j=1}^{i-1} \sum_{l=1}^r \lambda_{i,j,l}\bigg[ \varphi_l(c_i k J_{n,h,0}) G_{n,j,h}\nonumber
\\
&&\hspace{0.5cm}+\sum_{ll=0}^{p-3}(c_i k)^{ll+1} \varphi_{l+ll+1}(c_i k J_{n,h,0})
[C_h \partial \bar{J}(t_n)^{ll} \bar{G}_{n,j}-D_h \partial \bar{J}(t_n)^{ll+1} \bar{G}_{n,j}]  \nonumber
\\
&&\hspace{0.5cm}+(c_i k)^{p-1} \varphi_{l+p-1}(c_i k J_{n,h,0}) C_h \partial \bar{J}(t_n)^{p-2} \bar{G}_{n,j}  \nonumber \\
&&\hspace{0.5cm}+c_i k \big[\varphi_{l+1}(c_i k J_{n,h,0}) P_h \dot{h}(t_n) \nonumber \\
&&\hspace{1cm}+\sum_{ll=0}^{p-4} (c_i k)^{ll+1}\varphi_{l+ll+2}(c_i k J_{n,h,0})[C_h \partial \bar{J}(t_n)^{ll} \dot{h}(t_n) -D_h \partial \bar{J}(t_n)^{ll+1} \dot{h}(t_n)] \nonumber
\\
&&\hspace{1cm}+(c_i k)^{p-2} \varphi_{l+p-1}(c_i k J_{n,h,0})
C_h \partial \bar{J}(t_n)^{p-3}
\dot{h}(t_n)\big] \bigg], \label{Knih}
\end{eqnarray}
where
\begin{eqnarray}
J_{n,h,0}=A_{h,0}+\mbox{diag}(\Psi'(U_h^n)).
\label{jnh0}
\end{eqnarray}
On the other hand, we approximate $u_{n+1}$ in (\ref{usro}) by
\begin{eqnarray}
U_h^{n+1}&=&e^{k J_{n,h,0}}U_h^n \nonumber \\
 &&+\sum_{l=0}^{p-1} k^{l+1} \varphi_{l+1}( k J_{n,h,0})[C_h \partial \bar{J}(t_n)^l u(t_n)-D_h \partial \bar{J}(t_n)^{l+1}u(t_n)]\nonumber \\
&&+k^{p+1} \varphi_{p+1}(k J_{n,h,0})C_h \partial \bar{J}(t_n)^p u(t_n) \nonumber \\
&&+k t_n [\varphi_1(k J_{n,h,0})P_h \dot{h}(t_n)\nonumber \\
&& \hspace{1cm}+\sum_{ll=0}^{p-2} k^{ll+1} \varphi_{ll+2}(k J_{n,h,0})[C_h \partial \bar{J}(t_n)^{ll} \dot{h}(t_n)
-D_h \partial \bar{J}(t_n)^{ll+1}\dot{h}(t_n)]
\nonumber \\
&& \hspace{1cm}+k^p \varphi_{1+p} (k J_{n,h,0}) C_h \partial \bar{J}(t_n)^{p-1} \dot{h}(t_n)]\nonumber \\
&&+k\sum_{i=1}^s \sum_{l=1}^r \mu_{i,l}\bigg[\varphi_l(k J_{n,h,0})G_{n,i,h} \nonumber \\
&& \hspace{1cm} +\sum_{ll=0}^{p-2} k^{ll+1} \varphi_{l+ll+1}(k J_{n,h,0})[C_h \partial \bar{J}(t_n)^{ll} \bar{G}_{n,i}-D_h \partial \bar{J}(t_n)^{ll+1}\bar{G}_{n,i}]\nonumber \\
&& \hspace{1cm}+k^p \varphi_{l+p} (k J_{n,h,0}) C_h \partial \bar{J}(t_n)^{p-1} \bar{G}_{n,i}\nonumber \\
&& \hspace{1cm}+k[\varphi_{l+1}(k J_{n,h,0})P_h \dot{h}(t_n) \nonumber \\
&& \hspace{1.5cm}+\sum_{ll=0}^{p-3} k^{ll+1} \varphi_{l+ll+2}(k J_{n,h,0})[C_h \partial \bar{J}(t_n)^{ll} \dot{h}(t_n)-D_h \partial \bar{J}(t_n)^{ll+1}\dot{h}(t_n)]\nonumber \\
&& \hspace{1.5cm}+k^{p-1} \varphi_{l+p}(k J_{n,h,0})C_h \partial \bar{J}(t_n)^{p-2} \dot{h}(t_n)\big]\bigg].
\label{Uhn}
\end{eqnarray}
We remark that, in both (\ref{Knih}) and (\ref{Uhn}), the powers of $\bar{J}(t_n)$ must only be considered if the exponents are  $\ge 0$. In case the values of $p$ make those exponents negative, those terms do not turn up.

In a similar way to the proof for a bound of the local error for the full discretization of EERK methods \cite{cm_erk}, the following result is obtained for exponential Rosenbrock methods. We remark that there is a slight difference in the hypotheses being required and on the result on the second set of hypotheses since $A_{h,0}$ is now replaced by $\bar{J}_{n,h,0}$, which is defined as $J_{n,h,0}$ in (\ref{jnh0}) but replacing $U_h^n$ by $P_h u(t_n)$. We remind that the full discretization local error is given by $\rho_{n,h}=\bar{U}_h^{n+1}-P_h u(t_{n+1})$ where $\bar{U}_h^{n+1}$ is defined through (\ref{Knih})-(\ref{Uhn}) but substituting $U_h^n$ by $P_h u(t_n)$.

\begin{theorem} Under the first set of hypotheses of Theorem \ref{thloc}, (H1)-(H3) and assuming also that $\Psi \in C^{\bar{m}(Z)+p \bar{m}(A)+1}(\mathbb{C},\mathbb{C})$,
\begin{eqnarray}
\bar{J}^l u \in C([0,T], Z), \, l=0,\dots,p+1,  \quad \bar{J}^l h,\bar{J}^l \dot{h}\in C([0,T],Z),  \, l=0,\dots,p,
\label{regfle}
\end{eqnarray}
it happens that $\|\rho_{n,h}\|_h=O(k^{p+1}+k \varepsilon_h+ k \eta_h)$.

Moreover, under the second set of hypotheses of Theorem \ref{thloc} and assuming also (\ref{regfle}), but with $\hat{p}$ instead of $p$, it happens that, not only $\rho_{n,h}=O(k^{\hat{p}+1}+k \varepsilon_h + k \eta_h)$ but also $\|\bar{J}_{n,h,0}^{-1} \rho_{n,h}\|_h=O(k^{\hat{p}+2}+k \eta_h+k^2\varepsilon_h)$.
\label{thlocfd}
\end{theorem}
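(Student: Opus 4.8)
The plan is to reduce the full-discretization local error to the already-known time-semidiscretization local error $\rho_n$ of Theorem~\ref{thloc} plus a purely spatial defect, by writing
\begin{eqnarray}
\rho_{n,h} = \big[\bar{U}_h^{n+1} - P_h \bar{u}_{n+1}\big] + P_h \rho_n. \nonumber
\end{eqnarray}
Since $P_h$ is bounded into the discrete maximum norm, the second summand is immediately $O(k^{p+1})$ by the first statement of Theorem~\ref{thloc}, and $O(k^{\hat p+1})$ under the second set of hypotheses. All the work therefore concentrates on the spatial defect $\bar{U}_h^{n+1} - P_h \bar{u}_{n+1}$, i.e. on comparing the discrete formulas (\ref{Knih})-(\ref{Uhn}) with the nodal projection of the continuous stages and solution (\ref{etsro})-(\ref{usro}), both started from $u(t_n)$.

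First I would treat the elementary building blocks $\phi_{j,q,\alpha,\beta,\gamma}$ one at a time. Each is an initial boundary value problem whose continuous solution, through the variation-of-constants formula and the definition (\ref{varphi}) of the $\varphi_l$, is matched term by term against the corresponding $\varphi_l(c_i k J_{n,h,0})$ and $\varphi_l(k J_{n,h,0})$ expressions in (\ref{Knih})-(\ref{Uhn}). I would carry out the comparison by inserting the elliptic projection $R_h$ of (H2) as an intermediary, so that $P_h\phi-\phi_h$ splits into a part controlled by $\|P_h w-R_h w\|_h\le\eta_h\|w\|_Z$ and a part controlled after applying $A_{h,0}$, namely $\|A_{h,0}(P_h w-R_h w)\|_h\le\varepsilon_h\|w\|_Z$, using (\ref{consistency}). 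The uniform bounds on $\|\varphi_l(\tau J_{n,h,0})\|_h$ and $\|e^{\tau A_{h,0}}\|$ from (H1), together with a standard commutator/variation-of-constants identity relating $e^{\tau J_{n,h,0}}R_h$ to $R_h e^{\tau\bar{J}_0(t_n)}$, then turn each defect into a contribution of size $O(k\varepsilon_h+k\eta_h)$, the explicit factor $k$ coming from the $\tau$-integration over a step. The regularity (\ref{regfle}) and the invariance properties of $Z$ demanded in (H2) guarantee that all arguments occurring ($\bar{J}^l u(t_n)$, $\bar{J}^l h(t_n)$, the $\bar{G}_{n,j}$, and so on) lie in $Z$ with bounded $\|\cdot\|_Z$-norm, which legitimizes these bounds.

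Next I would propagate these per-block estimates through the recursive structure of the stages. Because $K_{n,i,h}$ depends on the earlier stages via $G_{n,j,h}$, and its continuous analogue $\bar{K}_{n,i}$ via $\bar{G}_{n,j}$, the spatial defects enter nonlinearly; here (H3) and the $C^1$ regularity of $\Psi$ provide the Lipschitz estimates that let the error $K_{n,i,h}-P_h\bar{K}_{n,i}$ be bounded inductively by $O(k\varepsilon_h+k\eta_h)$, and likewise $\bar{U}_h^{n+1}-P_h\bar{u}_{n+1}$. Combined with the split above, this yields the first claim $\|\rho_{n,h}\|_h=O(k^{p+1}+k\varepsilon_h+k\eta_h)$.

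For the sharper second statement I would apply $\bar{J}_{n,h,0}^{-1}$ to the same decomposition. On the term $\bar{J}_{n,h,0}^{-1}P_h\rho_n$ I would use that, up to an elliptic-projection defect, $\bar{J}_{n,h,0}^{-1}P_h$ behaves like $P_h\bar{J}_0^{-1}(t_n)$ --- this is exactly why (H2) requires $Z$ to be stable under $\bar{J}_0^{-1}(t)$ --- so that the bound $\|\bar{J}_0^{-1}(t_n)\rho_n\|=O(k^{\hat p+2})$ from the second statement of Theorem~\ref{thloc} transfers to the discrete level. On the spatial defect the gain comes from the modified consistency $\|\bar{J}_{n,h,0}(P_h w-R_h w)\|_h\le(\varepsilon_h+C\eta_h)\|w\|_Z$, immediate from (\ref{consistency}) and (H3) since $\bar{J}_{n,h,0}=A_{h,0}+\mbox{diag}(\Psi'(P_h u(t_n)))$: whenever a defect already carries a factor $\bar{J}_{n,h,0}$ alongside an explicit $k$, inversion returns the milder $\eta_h$ bound, whereas the pure $\varepsilon_h$ contributions acquire one more factor of $k$ through the same $\tau$-integration. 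I expect this bookkeeping to be the main obstacle: one must verify that after inversion every $\varepsilon_h$-term carries two factors of $k$ and every surviving $\eta_h$-term at least one, so as to land precisely on $O(k^{\hat p+2}+k\eta_h+k^2\varepsilon_h)$, all the while keeping track of the fact that the discretization matrix changes from step to step and that here $J_{n,h,0}$ is built from $P_h u(t_n)$.
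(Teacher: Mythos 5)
Your decomposition of $\rho_{n,h}$ into the projected semidiscretization error $P_h\rho_n$ (handled by Theorem~\ref{thloc}) plus the spatial defect of each $\phi_{j,q,\alpha,\beta,\gamma}$-block, controlled via the elliptic projection $R_h$ of (H2), the uniform bounds of (H1), and an induction through the stages using (H3), is exactly the argument the paper relies on: it states Theorem~\ref{thlocfd} without proof, deferring to the analogous EERK proof in \cite{cm_erk}, and your outline reproduces that route with the correct adjustments (replacing $A_{h,0}$ by $\bar{J}_{n,h,0}$, and the bookkeeping that yields $k^2\varepsilon_h$ and $k\eta_h$ after applying $\bar{J}_{n,h,0}^{-1}$). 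I see no gap beyond details the paper itself leaves to the cited reference.
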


Our problem now is to calculate the terms on the boundary on both (\ref{Knih}) and (\ref{Uhn}). In some cases, we will just be able to approximate them by using numerical differentiation and that may lead to instabilities if the order of the derivatives to be approximated is not low. Because of that, from now on, we will just consider the cases $p=1,2,3,$ which are in fact the most interesting ones in practice.

But, before doing that, let us first simplify the terms on the boundary related to $\bar{G}_{n,i}$.

\subsection{Simplification of the boundaries related to $\bar{G}_{n,i}$ without losing order}
\label{SRB}
We firstly notice that, with a similar proof to Lemma 3.1 in \cite{acr2} and using also Lemmas 6 and 7 in \cite{CaM}, when $\alpha=\beta$ in (\ref{phi0})-(\ref{phij}),
\begin{eqnarray}
\phi_{j,q,\alpha,\alpha,\gamma}(\tau)&=&\sum_{l=0}^q \frac{\tau^l}{(l+j)!} (A+\Psi'(\gamma))^l \alpha
\nonumber \\
&&+\tau^{q+1} \varphi_{j+q+1}(\tau (A_0+\Psi'(\gamma))) (A+\Psi'(\gamma))^{q+1} \alpha. \nonumber
\end{eqnarray}
Then, using (\ref{bkni}), a first, second and third order approximation of $\bar{K}_{n,i}$ will be respectively given by
\begin{eqnarray}
\hat{\bar{K}}_{n,i}&=&u(t_n), \nonumber \\
\hat{\hat{\bar{K}}}_{n,i}&=& u(t_n)+c_i k \bar{J}(t_n)u(t_n)+c_i k t_n \dot{h}(t_n)+k \sum_{j=1}^{i-1} \sum_{l=1}^r \lambda_{i,j,l} \frac{1}{l!}\hat{\bar{G}}_{n,j}, \label{kkni} \\
\hat{\hat{\hat{\bar{K}}}}_{n,i}&=& u(t_n)+c_i k \bar{J}(t_n)u(t_n)+\frac{(c_i k)^2}{2}\bar{J}(t_n)^2 u(t_n)
\nonumber \\
&&+c_i k t_n [\dot{h}(t_n)+\frac{c_i k}{2} \bar{J}(t_n)\dot{h}(t_n)] \nonumber \\
 && +k \sum_{j=1}^{i-1} \sum_{l=1}^r \lambda_{i,j,l}\big[\frac{1}{l!}\hat{\hat{\bar{G}}}_{n,j}+\frac{c_i k}{(l+1)!}[\bar{J}(t_n)\hat{\bar{G}}_{n,j}+\dot{h}(t_n)] \big], \label{khhh}
\end{eqnarray}
where $\hat{\bar{G}}_{n,i}$, $\hat{\hat{\bar{G}}}_{n,i}$ correspond  to (\ref{gnbj}) with $\bar{K}_{n,i}$ replaced respectively by $\hat{\bar{K}}_{n,i}$ and $\hat{\hat{\bar{K}}}_{n,i}$.
We will also denote as $\hat{\hat{\hat{\bar{G}}}}_{n,i}$ to (\ref{gnbj}) evaluated at $\hat{\hat{\hat{\bar{K}}}}_{n,i}$.  Then, we suggest to do the simplifications in Table \ref{t1}, which are different depending on whether we are calculating the stages or $U_h^{n+1}$.

\begin{table}[t]
\caption{Simplifications for the boundaries in (\ref{Knih}) and (\ref{Uhn})}
\label{t1}
\begin{center}
\begin{tabular}{ccc}
\hline\noalign{\smallskip}
$p$ & Simplification in $K_{n,i,h}$ & Simplification in $U_h^{n+1}$ \\ \noalign{\smallskip}\hline\noalign{\smallskip}
1 & - & $\partial \bar{G}_{n,i} \approx \partial \hat{\bar{G}}_{n,i}$ \\
\hline\noalign{\smallskip}
2 & $\partial \bar{G}_{n,i} \approx \partial \hat{\bar{G}}_{n,i}$ & $\partial \bar{G}_{n,i} \approx \partial \hat{\hat{\bar{G}}}_{n,i}$ \\
  &                                                                & $\partial \bar{J}(t_n) \bar{G}_{n,i} \approx \partial \bar{J}(t_n) \hat{\bar{G}}_{n,i}$ \\ \hline\noalign{\smallskip}
3 & $\partial \bar{G}_{n,i} \approx \partial \hat{\hat{\bar{G}}}_{n,i}$  & $\partial \bar{G}_{n,i} \approx \partial \hat{\hat{\hat{\bar{G}}}}_{n,i}$ \\
 & $\partial \bar{J}(t_n) \bar{G}_{n,i} \approx \partial \bar{J}(t_n) \hat{\bar{G}}_{n,i}$  & $\partial \bar{J}(t_n) \bar{G}_{n,i} \approx \partial \bar{J}(t_n) \hat{\hat{\bar{G}}}_{n,i}$ \\
 &                                                                                          & $\partial \bar{J}(t_n)^2 \bar{G}_{n,i} \approx \partial \bar{J}(t_n)^2 \hat{\bar{G}}_{n,i}$
  \\ \hline\noalign{\smallskip}
\end{tabular}
\end{center}
\end{table}

With these simplifications, the full discretization local error, which we will now denote by $\rho_{n,h}^{simp}$, in a similar way to Theorem \ref{thlocfd}, satisfies the following
\begin{theorem}
Whenever $p=1,2,3$, under hypotheses (A1)-(A9) and (H1)-(H3), if the exponential Rosenbrock  method (\ref{etapas})-(\ref{eerk}) has non-stiff order $\ge p$, when integrating (\ref{laibvp}) through (\ref{Knih})-(\ref{Uhn}) with $u\in C([0,T], D(A^{p+1}))\cap C^{p+1}([0,T],X)$, $\Psi\in C^{\bar{m}(Z)+p\bar{m}(A)+1}(\mathbb{C},\mathbb{C})$, $h,\dot{h}\in C^p([0,T],D(A^p))$ and (\ref{regfle}), it happens that the local error of the full discretization with the simplified boundaries in Table \ref{t1} satisfies $\rho_{n,h}^{simp}=O(k^{p+1}+k\varepsilon_h)$.

Furthermore, if the method has non-stiff order $\ge p+1$,  $u\in C([0,T], D(A^{p+2}))\cap C^{p+2}([0,T],X)$, $h, \dot{h}\in C^{p+1}([0,T],D(A^{p+1}))$
and, in case   $p> 2$ or $p=2$ with $s>1$, the following bound holds
\begin{eqnarray}
\|\bar{J}_0(t)^{-1} [\Psi'(u(t))\bar{J}_0(t) w] \|\le C \|w\|, \mbox{ for every }t\in [0,T], \, w\in D(A_0), \label{cspc}
\end{eqnarray}
it happens that $\bar{J}_{n,h,0}^{-1} \rho_{n,h}^{simp}=O(k^{p+2}+k \eta_h+k^2 \varepsilon_h)$.
\label{thlocs}
\end{theorem}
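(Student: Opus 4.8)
The plan is to follow the proof of Theorem \ref{thlocfd} verbatim, isolating only the terms that Table \ref{t1} modifies and treating everything else exactly as there. Writing the simplified local error as $\rho_{n,h}^{simp}=\rho_{n,h}+\Delta_{n,h}$, where $\Delta_{n,h}$ gathers precisely the boundary replacements $\partial\bar{J}(t_n)^m\bar{G}_{n,i}\approx\partial\bar{J}(t_n)^m\widehat{\bar{G}}_{n,i}$ of Table \ref{t1}, the part $\rho_{n,h}$ is controlled by Theorem \ref{thlocfd} and all the new work concentrates on $\Delta_{n,h}$. The starting point is the closed form of $\phi_{j,q,\alpha,\alpha,\gamma}(\tau)$ recalled just before the statement, which identifies $\hat{\bar{K}}_{n,i}$, $\hat{\hat{\bar{K}}}_{n,i}$ and $\hat{\hat{\hat{\bar{K}}}}_{n,i}$ in (\ref{kkni})--(\ref{khhh}) as truncations of $\bar{K}_{n,i}$ of respective orders $1,2,3$, so that $\bar{K}_{n,i}-\hat{\bar{K}}_{n,i}=O(k)$, $\bar{K}_{n,i}-\hat{\hat{\bar{K}}}_{n,i}=O(k^2)$ and $\bar{K}_{n,i}-\hat{\hat{\hat{\bar{K}}}}_{n,i}=O(k^3)$ in the norms $\|\cdot\|_l$ of (A8).

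Each entry of $\Delta_{n,h}$ has the shape $(\mbox{prefactor})\,\varphi_{\nu}(c k\,\bar{J}_{n,h,0})\,[C_h\ \mbox{or}\ D_h]\,\partial\bar{J}(t_n)^m\big(\bar{G}_{n,i}-\widehat{\bar{G}}_{n,i}\big)$. Since the $h$- and $\dot{h}$-data in (\ref{gnbj}) do not depend on the stage, they cancel in the difference, leaving only $\big[\Psi(\bar{K}_{n,i})-\Psi'(u(t_n))\bar{K}_{n,i}\big]-\big[\Psi(\widehat{\bar{K}}_{n,i})-\Psi'(u(t_n))\widehat{\bar{K}}_{n,i}\big]$; bounding $\bar{J}(t_n)^m$ of this through (A8)--(A9) and (H1)--(H3) produces a factor of the size of the hatted-stage error, namely $O(k^{\,p-m})$, because Table \ref{t1} always allots exactly $p-m$ hats to the term carrying $\partial\bar{J}(t_n)^m$ in $U_h^{n+1}$ and $p-1-m$ hats in the stages. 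The decisive bookkeeping is that the smallest effective power of $k$ multiplying such a term in (\ref{Knih})--(\ref{Uhn}) is $k^{m+1}$: the $D_h$ contributions carry $k^{m+1}$ outright, while the $C_h$ contributions carry a nominal $k^{m+2}$ but lose one power through $z\varphi_{\nu}(z)=\varphi_{\nu-1}(z)-1/(\nu-1)!$ together with the boundedness of $\bar{J}_{n,h,0}^{-1}C_h$ from (H1a), (H1c) and (H2b). Multiplying $k^{m+1}$ by $O(k^{\,p-m})$ gives $O(k^{\,p+1})$ for every boundary term in $U_h^{n+1}$; the stage terms give $O(k^p)$ on their own but enter the final error through $G_{n,i,h}$ with a further power of $k$, so they too land at $O(k^{p+1})$.

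For the first bound this yields $\rho_{n,h}^{simp}=O(k^{p+1}+k\varepsilon_h)$. The genuinely new point with respect to Theorem \ref{thlocfd} is the absence of the $k\eta_h$ term: I would verify, by re-inspecting where $\eta_h$ enters the proof of Theorem \ref{thlocfd}, that it is produced exactly by the boundary pieces that the simplification now replaces by closed-form polynomials in $\bar{J}(t_n)$ of the data. These polynomial reconstructions are consistent with the discrete elliptic problem (\ref{rh}) only through $A_{h,0}(P_h-R_h)$, i.e.\ through $\varepsilon_h$ and not through $P_h-R_h$, so the $\eta_h$-carrying contribution simply does not arise. This step should be presented carefully rather than deduced by difference, since the difference argument alone would retain the $k\eta_h$ inherited from $\rho_{n,h}$.

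For the second bound I would premultiply by $\bar{J}_{n,h,0}^{-1}$. On $\rho_{n,h}$ this is controlled by the second statement of Theorem \ref{thlocfd} used with $\hat{p}=p$ and non-stiff order $\ge p+1$, giving $O(k^{p+2}+k\eta_h+k^2\varepsilon_h)$; on $\Delta_{n,h}$ the factor $\bar{J}_{n,h,0}^{-1}$ turns each $C_h$ into a bounded operator and keeps each $D_h$ bounded, so every simplification term gains one power of $k$ and becomes $O(k^{p+2})$. The main obstacle is the term with $m\ge1$: after inserting $\bar{J}_{n,h,0}^{-1}$ one must commute it across the $\Psi'$ factor hidden inside $\bar{J}(t_n)^m\bar{G}_{n,i}$, which is possible only through the commutator-type hypothesis (\ref{cspc}). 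This is exactly why (\ref{cspc}) is demanded precisely when $p>2$, or $p=2$ with $s>1$, the regimes in which a term carrying $\partial\bar{J}(t_n)^m\bar{G}_{n,i}$ with $m\ge1$ actually appears in Table \ref{t1}; when $p=2$ and $s=1$ no such term is present and (\ref{cspc}) is unnecessary. The hardest part overall is thus the uniform matching, across the doubly-indexed sums of (\ref{Knih})--(\ref{Uhn}), of the number of hats mandated by Table \ref{t1} against the correct effective power of $k$, together with the commutator manipulation needed before any power of $\bar{J}_{n,h,0}^{-1}$ is wasted.
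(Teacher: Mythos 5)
First, a caveat: the paper never writes out a proof of Theorem \ref{thlocs}. It is stated as holding ``in a similar way to Theorem \ref{thlocfd}'', which in turn defers to the EERK analysis of \cite{cm_erk}, so your sketch can only be judged on its own terms. Its architecture --- isolate the defect $\Delta_{n,h}$ introduced by Table \ref{t1}, identify the hatted stages as Taylor truncations via the closed form of $\phi_{j,q,\alpha,\alpha,\gamma}$, and match powers of $k$ against $C_h$ and $D_h$ using $z\varphi_\nu(z)=\varphi_{\nu-1}(z)-1/(\nu-1)!$ together with (H1c) and (H2b) --- is the natural one and consistent with what the authors indicate. Your reading of why (\ref{cspc}) is demanded exactly for $p>2$ or $p=2$ with $s>1$ (the only cases in which a simplified term $\partial\bar J(t_n)^m\bar G_{n,i}$ with $m\ge 1$ carries a nonzero defect) is also correct.

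The concrete gap is in the order bookkeeping for the second bound. You estimate $\bar J(t_n)^m\big(\bar G_{n,i}-\hat{\bar G}_{n,i}\big)$ by the size of the hatted-stage error, $O(k^{j})$ for $j$ hats, and then assert that premultiplying by $\bar J_{n,h,0}^{-1}$ gains one power of $k$ on \emph{every} simplification term. That is true for the $C_h$ contributions, where $\bar J_{n,h,0}^{-1}C_h$ becomes bounded without spending the power of $k$ that the first bound spent through the $\varphi$-recurrence, but it is false for the $D_h$ contributions: $\bar J_{n,h,0}^{-1}D_h$ is merely bounded, which $D_h$ already was by (H2b), so no factor of $k$ is gained and those terms remain stuck at $O(k^{m+1})\cdot O(k^{p-m})=O(k^{p+1})$. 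The missing ingredient is the sharper estimate
\[
\bar G_{n,i}-\hat{\bar G}^{(j)}_{n,i}=\big[\Psi'(\xi_{n,i})-\Psi'(u(t_n))\big]\big(\bar K_{n,i}-\hat{\bar K}^{(j)}_{n,i}\big)+O(k^{j+1})=O(k^{j+1}),
\]
which exploits that $\bar G$ in (\ref{gnbj}) is precisely the remainder of $\Psi$ after linearization at $u(t_n)$ and that all stages lie within $O(k)$ of $u(t_n)$ --- the same observation that underlies the remark following the theorem, where $\hat{\hat{\bar G}}_{n,i}$ is collapsed to (\ref{gbni_simp}). With that extra power both the $C_h$ and $D_h$ defects land at $O(k^{p+2})$ after applying $\bar J_{n,h,0}^{-1}$, with (A8)--(A9) and (\ref{cspc}) carrying $\bar J(t_n)^m$ and $\bar J_{n,h,0}^{-1}$ through the $\Psi'$ factors. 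Separately, you are right that the decomposition $\rho^{simp}_{n,h}=\rho_{n,h}+\Delta_{n,h}$ cannot by itself yield the first bound as stated, since Theorem \ref{thlocfd} contributes $k\eta_h$; flagging this is honest, but as written it leaves $O(k^{p+1}+k\varepsilon_h)$ unestablished --- one must rerun the Theorem \ref{thlocfd} argument tracking where $P_h-R_h$ (as opposed to $A_{h,0}(P_h-R_h)$) enters, rather than invoke it as a black box.
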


\begin{remark}
Notice that, as $\hat{G}_{n,i}$ aims to be a first-order approximation of (\ref{gnbj}) and $h(t_{n,j})-t_{n,j}\dot{h}(t_n)=h(t_n)-t_n \dot{h}(t_n)+O(k^2)$, we can take
\begin{eqnarray}
\hat{\bar{G}}_{n,i}=\Psi(u(t_n))+h(t_n)-t_n \dot{h}(t_n)-\Psi'(u(t_n))u(t_n).
\label{gbni_simp}
\end{eqnarray}
Moreover, as
$$
\Psi(\hat{\hat{\bar{K}}}_{n,i})-\Psi'(u(t_n))\hat{\hat{\bar{K}}}_{n,i}=\Psi(u(t_n))-\Psi'(u(t_n))u(t_n)+O(\|\hat{\hat{\bar{K}}}_{n,i}-u(t_n)\|^2),
$$
$\hat{\hat{\bar{G}}}_{n,i}$, as a second-order approximation of (\ref{gnbj}), can also be taken as (\ref{gbni_simp}). Besides, considering the left part of (\ref{scond}), $\sum \sum \lambda_{i,j,l}/l!=c_i$, which implies that
$$
\hat{\hat{\hat{\bar{K}}}}_{n,i}=u(t_n)+c_i k \dot{u}(t_n)+O(k^2).$$
This means that in case that $\Psi,h \in C^2$, $\hat{\hat{\hat{\bar{G}}}}_{n,i}$ can be taken as
\begin{eqnarray}
\hat{\hat{\hat{\bar{G}}}}_{n,i}&=&\Psi(u(t_n))+h(t_n)-t_{n}\dot{h}(t_n)-\Psi'(u(t_n))u(t_n)
\nonumber \\
&&+\frac{c_i^2 k^2}{2}[\Psi''(u(t_n))\dot{u}(t_n)^2+\ddot{h}(t_n)]. \nonumber
\end{eqnarray}
\end{remark}

\subsection{Global error considering the error of approximation of the terms on the boundary}

We notice that, in the same way that happened with EERK methods \cite{cm_erk}, the terms on the boundary on both (\ref{Knih}) and (\ref{Uhn}) are not always exactly calculable. These terms correspond to both $\partial \bar{J}(t_n)^l u(t_n)$ and the simplified version suggested in the previous section for $\partial \bar{J}(t_n)^l \bar{G}_{n,i}$. (We assume that $\partial \bar{J}(t_n)^l \dot{h}(t_n)$ is exactly calculable since $h(t)$ is part of the given data for problem (\ref{laibvp}) with $f$ in (\ref{nonlinearterm})).

Looking carefully at those terms, similar conclusions as with EERK methods can be drawn.

For $p=1$, the terms to calculate are $\partial u(t_n)=g(t_n)$ and
\begin{eqnarray}
\partial \bar{J}(t_n) u(t_n)&=&\partial [ A u(t_n)+\Psi'(u(t_n))u(t_n)] \nonumber \\
&=&\partial [\dot{u}(t_n)-\Psi(u(t_n))-h(t_n)+\Psi'(u(t_n))u(t_n)], \nonumber \\
\partial \hat{\bar{G}}_{n,i}&=&\partial[ \Psi(u(t_n))+h(t_{n})-t_{n} \dot{h}(t_n)-\Psi'(u(t_n))u(t_n)]. \label{p1}
\end{eqnarray}
 When considering Dirichlet boundary conditions, all the terms are exactly calculable in terms of data. In case the boundary conditions are Robin or Neumann, the terms on $\partial \bar{J}(t_n) u(t_n)$ and $\partial \hat{\bar{G}}_{n,i}$ can be approximated by the numerical solution at the nodes on the boundary at the last calculated step. In such a way, the error committed when calculating those terms is $O(\|e_{n,h}\|_h)$, where $e_{n,h}=U_h^n-P_h u(t_n)$.

For $p=2$, the additional terms to calculate (omitting the argument $t_n$ for brevity and taking into account that $\partial \hat{\hat{G}}_{n,i}$ can be calculated through (\ref{gbni_simp}) in the same way that $\hat{G}_{n,i}$) are
\begin{eqnarray}
\partial \bar{J}^2 u&=&\partial \bigg[ A^2 u+A [\Psi'(u)u]+\Psi'(u)Au+[\Psi'(u)]^2 u \bigg] \nonumber \\
&=&\partial \bigg[\ddot{u}-\dot{h}-A[\Psi(u)]- A h+A [\Psi'(u)u]-\Psi'(u)[\Psi(u)+h]+[\Psi'(u)]^2 u \bigg], \nonumber \\
\partial \bar{J} \hat{\bar{G}}_{n,i}&=&\partial \bigg[ \bar{J}[\Psi(u)+h-t_{n}\dot{h}-\Psi'(u)u] \bigg] \nonumber \\
&=&\partial\bigg[[A+\Psi'(u)][\Psi(u)+h-t_{n}\dot{h}-\Psi'(u)u] \bigg]. \label{p2}
\end{eqnarray}
We notice then that, with Dirichlet boundary conditions, every term on the boundary is exactly calculable except for $\partial \bar{J}(t_n)^2 u(t_n)$ and $\partial \bar{J}(t_n)  \hat{\bar{G}}_{n,i}$. In such a case, in general it is necessary to resort to numerical differentiation to approximate a certain $\gamma$-th derivative in space. We remark that, for example, when $A$ corresponds to the second derivative in space in $1$ dimension,
\begin{eqnarray}
A[\Psi(u)]&=&\Psi''(u)u_x^2+\Psi'(u)[\dot{u}-\Psi(u)-h], \nonumber \\
A[\Psi'(u)u]&=&\Psi'''(u)u_x^2 u+[\Psi''(u)u+\Psi'(u)][\dot{u}-\Psi(u)-h]+2 \Psi''(u)u_x^2, \label{exp1d}
\end{eqnarray}
and $u_x$ on the boundary must be approximated. The error committed is then $O(\nu_h+\|e_{n,h}\|_h/h^\gamma)$ where $\nu_h$ is a bound for the error on the numerical differentiation if the exact values of the function to differentiate were taken. With Robin/Neumann boundary conditions, $\partial \bar{J}(t_n) u(t_n)$ and $\partial \hat{\bar{G}}_{n,i}$ can be calculated except for $O(\|e_{n,h}\|_h)$, as stated before for $p=1$, but it is necessary to calculate also the terms in (\ref{p2}). As clearly seen from (\ref{exp1d}) for a particular differential operator $A$, the approximation of $\dot{u}$ will be necessary,
  which leads to a $O(\|e_{n,h}\|_h/k+\mu_{k,1})$-error, where $\mu_{k,1}$ is the error of numerical differentiation for the first derivative in time if the exact values were chosen. For that particular case, no space discretization will be required because the $x$-derivative of (\ref{exp1d}) just leads to terms which contain either $u, u_x=g, u_{xx}=\dot{u}-\Psi(u)-h$ or $\dot{u}_x=\dot{g}$. However, for a more general operator $A$, some space numerical differentiation may be required and then the error from the approximation of the simplified boundaries would be $O(\mu_{k,1}+\|e_{n,h}\|_h/k+\nu_h+\|e_{n,h}\|_h/h^\gamma)$.

Finally, for $p=3$, the additional terms to calculate are
\begin{eqnarray}
\partial \bar{J}^3 u&=& \partial \bigg[ A^3 u+A^2 [\Psi'(u)u]+A[\Psi'(u) Au]+A \big[[\Psi'(u)]^2u\big] \nonumber \\
&&+\Psi'(u)A^2u+\Psi'(u) A[\Psi'(u)u]+[\Psi'(u)]^2 Au+[\Psi'(u)]^3 u \bigg] \nonumber \\
&=& \partial \bigg[u^{(3)}-\Psi''(u)\dot{u}^2-\Psi'(u)\ddot{u}-\ddot{h}-A[\Psi'(u)\dot{u}]-A \dot{h}-A^2 \Psi(u) \nonumber \\
&&-A^2 h+A^2 [\Psi'(u)u]+A[\Psi'(u) Au]+A \big[[\Psi'(u)]^2u\big] \nonumber \\
&&+\Psi'(u)A^2u+\Psi'(u) A[\Psi'(u)u]+[\Psi'(u)]^2 Au+[\Psi'(u)]^3 u \bigg], \label{j3u} \\
\partial \hat{\hat{\hat{\bar{G}}}}_{n,i}&=&\partial\bigg[ \Psi(u)+h-t_{n} \dot{h}-\Psi'(u)u+\frac{c_i^2 k^2}{2}[\Psi''(u)\dot{u}^2+\ddot{h}]\bigg],
 \label{g3b} \\
\partial \bar{J} \hat{\hat{\bar{G}}}_{n,i}&=&\partial \bigg[[A+\Psi'(u)][\Psi(u)+h-t_{n}\dot{h}- \Psi'(u)\dot{u}] \bigg], \label{jg2} \\
\partial \bar{J}^2 \hat{\bar{G}}_{n,i}&=& \partial\bigg[  [ A+\Psi'(u)]^2[\Psi(u)+h-t_n \dot{h}-\Psi'(u)u]\bigg],
 \label{j2g}
\end{eqnarray}
where again, in the second equality, (\ref{laibvp}) has been used. Then, with Dirichlet boundary conditions, although (\ref{g3b}) can be exactly calculated, for the other terms numerical differentiation in space of order less than those of $A$ is required in general.  Notice, for example,  that  when $A$ is the second derivative in one dimension,
\begin{eqnarray}
u_{xx}&=&\dot{u}-\Psi(u)-h, \label{uxx} \\
u_{xxx}&=&\dot{u}_x-\Psi'(u)u_x-h_x, \label{uxxx} \\
u_{xxxx}&=&\ddot{u}-\Psi'(u)\dot{u}-\dot{h}-\Psi''(u)u_x^2 -\Psi'(u)[\dot{u}-\Psi(u)-h]-h_{xx}, \label{uxxxx}
\end{eqnarray}
and so just $u_x$ and $\dot{u}_x$ on the boundary must be approximated. Therefore, we can say that the term (\ref{j3u}) can be approximated with a $O(\nu_h+\|e_{n,h}\|_h/(k h^\gamma)+\mu_{k,1}/h^\gamma)$-error (where in the example $\gamma=1$).  On the other hand, with Robin/Neu\-mann boundary conditions, when calculating (\ref{j3u}), apart from space derivatives of $u$ and $\dot{u}$, it will also be necessary to approximate $u$,
$\dot{u}$ and $\ddot{u}$ on the boundary. Therefore, the error committed will be $O(\mu_{k,1}+\mu_{k,2}+\|e_{n,h}\|_h/k^2 +\nu_h +\|e_{n,h}\|_h/(k h^\gamma))$, where $\gamma$ is in general one order less than that of the space derivative.
 However, in some cases, the value of $\gamma$ can be smaller, as it happens when $A$ is a second-order operator, since the first derivative is implicitly given by the boundary condition and then no numerical differentiation in space is in fact required.

Considering this, the following theorem follows on the global error which is committed when integrating with exponential Rosenbrock methods:

\begin{theorem}
Let us assume the first set of hypotheses of Theorem \ref{thlocs} and also, just for $p=2,3,$ that, for a certain constant $C$,
\begin{eqnarray}
\frac{k}{h^\gamma} \le C,
\label{cfl}
\end{eqnarray}
where $\gamma$ is the order of the space derivative which must be approximated through numerical differentiation to calculate the necessary boundaries of the suggested method. Then,
it happens that, under Dirichlet boundary conditions,
\begin{list}{$\bullet$}{}
\item For $p=1$,  $\|e_{n,h}\|_h=O(k+\varepsilon_h)$,
\item For $p=2$, $\|e_{n,h}\|_h=O(k^2+k \nu_h+\varepsilon_h)$,
\item For $p=3$, $\|e_{n,h}\|_h=O(k^3+k \nu_h+k \mu_{k,1}+\varepsilon_h)$,
\end{list}
and,  under Robin/Neumann boundary conditions,
\begin{list}{$\bullet$}{}
\item For $p=1$,  $\|e_{n,h}\|_h=O(k+\varepsilon_h)$,
\item For $p=2$, $\|e_{n,h}\|_h=O(k^2+k \mu_{k,1}+k \nu_h+\varepsilon_h)$,
\item For $p=3$, $\|e_{n,h}\|_h=O(k^3+k \mu_{k,1}+k^2 \mu_{k,2}+k \nu_h+\varepsilon_h)$.
\end{list}
(Here $\nu_h, \mu_{k,1},\mu_{k,2}$ come from the error when using numerical differentiation in space and time to approximate terms on the boundary.)

Assuming also the second set of hypotheses of the same theorem and that the following condition holds for a constant $C$ which is independent of $k$ and $h$,
\begin{eqnarray}
\bigg{\|} \bigg(\sum_{r=n-j}^{n-1} e^{k \bar{J}_{n-1,h,0}}\dots e^{k \bar{J}_{r+1,h,0}}\bigg) k \bar{J}_{n-j,h,0}\bigg{\|}_h \le C, \, j=1,\dots,n, \, 0 \le nk \le T,
\label{parabol}
\end{eqnarray}
and that, for $t\in [0,T]$, $\dot{u}(t)\in D(A^{p+1})$ and
\begin{eqnarray}
\bar{J}^l(t)  \dot{u}(t) \in Z, \quad l=0,1,\dots,p+1,
\label{regflesp1}
\end{eqnarray}
it happens that, under Dirichlet boundary conditions,
\begin{list}{$\bullet$}{}
\item For $p=1$,  $\|e_{n,h}\|_h=O(k^2+k \varepsilon_h+\eta_h)$,
\item For $p=2$, $\|e_{n,h}\|_h=O(k^3+k \nu_h+k \varepsilon_h+\eta_h)$,
\item For $p=3$, $\|e_{n,h}\|_h=O(k^4+k \nu_h+k \mu_{k,1}+k\varepsilon_h+\eta_h)$,
\end{list}
and, under Robin/Neumann boundary conditions,
\begin{list}{$\bullet$}{}
\item For $p=1$,  $\|e_{n,h}\|_h=O(k^2+k \varepsilon_h+\eta_h)$,
\item For $p=2$, $\|e_{n,h}\|_h=O(k^3+k \mu_{k,1}+k \nu_h+k \varepsilon_h+\eta_h)$,
\item For $p=3$, $\|e_{n,h}\|_h=O(k^4+k \mu_{k,1}+k^2 \mu_{k,2}+k \nu_h+k\varepsilon_h+\eta_h)$.
\end{list}
\label{thglob}
\end{theorem}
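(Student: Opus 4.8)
The plan is to derive a one-step recursion for the global error $e_{n,h}=U_h^n-P_hu(t_n)$ and then to solve it, controlling the accumulation of the local errors of Theorem \ref{thlocs} together with the errors made in approximating the boundary terms. First I would compare the computed step (\ref{Knih})--(\ref{Uhn}), which uses the calculated values $U_h^n$ and the numerically approximated boundaries, against the reference step $\bar U_h^{n+1}$ built from $P_hu(t_n)$ and from the exact boundary data, whose deviation from $P_hu(t_{n+1})$ is exactly the simplified local error $\rho_{n,h}^{simp}$. Writing $J_{n,h,0}=\bar J_{n,h,0}+\mbox{diag}(\Psi'(U_h^n)-\Psi'(P_hu(t_n)))$, using that the last matrix is $O(\|e_{n,h}\|_h)$ by (H3), and using the Lipschitz dependence of the stage and $G$-terms on the solution, this yields
\[
e_{n+1,h}=L_{n,h}\,e_{n,h}+\rho_{n,h}^{simp}+\delta_{n,h}, \qquad e_{0,h}=0,
\]
where $L_{n,h}$ has leading part $e^{k\bar J_{n,h,0}}$ plus $O(k\|\cdot\|_h)$ Lipschitz corrections, and $\delta_{n,h}$ collects the boundary-approximation errors, i.e.\ the contributions of $\nu_h$, $\mu_{k,1}$, $\mu_{k,2}$ together with the terms proportional to $\|e_{n,h}\|_h$, $\|e_{n,h}\|_h/h^\gamma$ and $\|e_{n,h}\|_h/k$ discussed before the statement.

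For the first set of bounds I would unroll the recursion as $e_{n,h}=\sum_{r=0}^{n-1}\big(L_{n-1,h}\cdots L_{r+1,h}\big)(\rho_{r,h}^{simp}+\delta_{r,h})$. By (H1b) the propagators are power-bounded uniformly in $h$, and the $O(k)$ corrections in $L_{i,h}$ keep the products uniformly bounded on $[0,T]$ through a discrete Gronwall inequality. Inserting $\rho_{r,h}^{simp}=O(k^{p+1}+k\varepsilon_h)$ and summing $n=O(1/k)$ terms gives the $O(k^p+\varepsilon_h)$ part. Each differentiation error in $\delta_{r,h}$ enters the formulas (\ref{Knih})--(\ref{Uhn}) multiplied by a definite power of $k$ (one more than it will show in the global bound, the extra power being lost in the summation of $1/k$ steps), which produces the terms $k\nu_h$, $k\mu_{k,1}$, $k^2\mu_{k,2}$; here one uses that $C_h$ appears only in the bounded combination $\bar J_{n,h,0}^{-1}C_h$ of (H1c) together with $z\varphi_{l+1}(z)=\varphi_l(z)-1/l!$, and that $\|D_h\|_h$ is bounded by (H2b). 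Finally, the part of $\delta_{r,h}$ proportional to $\|e_{r,h}\|_h$ is absorbed into a last discrete Gronwall step, the CFL-type assumption (\ref{cfl}) $k/h^\gamma\le C$ (and the trivial $k/k=1$ for the time-difference terms) being precisely what turns $k\,\|e_{r,h}\|_h/h^\gamma$ into $O(\|e_{r,h}\|_h)$.

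The core of the argument is the second set of bounds, where no order reduction must survive. Now I would use the refined local estimate $\|\bar J_{n,h,0}^{-1}\rho_{n,h}^{simp}\|_h=O(k^{p+2}+k\eta_h+k^2\varepsilon_h)$ and treat the dominant accumulation $\sum_r\big(e^{k\bar J_{n-1,h,0}}\cdots e^{k\bar J_{r+1,h,0}}\big)\rho_{r,h}^{simp}$ by writing $\rho_{r,h}^{simp}=\bar J_{r,h,0}\,w_r$ with $w_r=\bar J_{r,h,0}^{-1}\rho_{r,h}^{simp}$ and performing a summation by parts in $r$. The factor $\bar J_{r,h,0}$ is absorbed into telescoped products of the propagators, and assumption (\ref{parabol}) is exactly the uniform bound that keeps the resulting sums of products multiplied by $k\bar J_{\cdot,h,0}$ bounded independently of $k$, $h$ and $n$. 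Because the sequences $w_r$ inherit, through the regularity hypotheses (\ref{regfle}) and (\ref{regflesp1}) on $\bar J^l\dot u$, the structure of a smooth function of $t_r$ sampled on the grid, the consecutive differences $w_r-w_{r-1}$ gain one extra factor of $k$; the net effect is that the accumulated contribution behaves like $k^{-1}\max_r\|\bar J_{r,h,0}^{-1}\rho_{r,h}^{simp}\|_h$, which maps $O(k^{p+2})\to O(k^{p+1})$, $O(k\eta_h)\to O(\eta_h)$ and $O(k^2\varepsilon_h)\to O(k\varepsilon_h)$, i.e.\ exactly the improved orders. The numerical-differentiation errors $\nu_h,\mu_{k,1},\mu_{k,2}$ are not amenable to this gain and stay as in the first part, while the $\|e_{r,h}\|_h$-proportional terms are again closed by discrete Gronwall under (\ref{cfl}).

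The step I expect to be the main obstacle is the summation by parts with a Jacobian $\bar J_{n,h,0}$ that changes at every step. For EERK methods the linear operator is fixed and the telescoping $e^{mk\bar J}-e^{(m-1)k\bar J}=e^{(m-1)k\bar J}k\bar J+O(k^2)$ is immediate, whereas here one has to commute and regroup products of distinct exponentials; assumption (\ref{parabol}) is tailored precisely so that these regroupings remain uniformly bounded, and it is where the parabolic smoothing of the semigroups generated by the $\bar J_0(t)$ is really used. A secondary, purely bookkeeping obstacle is to track, for $p=1,2,3$ and for both Dirichlet and Robin/Neumann conditions, the exact power of $k$ at which each boundary term of (\ref{Knih})--(\ref{Uhn}), and hence each error $\nu_h,\mu_{k,1},\mu_{k,2}$, enters, since this is what fixes the precise exponents displayed in the statement.
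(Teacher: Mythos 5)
Your proposal follows essentially the same route as the paper's proof: the same decomposition of $e_{n+1,h}$ into a propagated error $e^{k\bar J_{n,h,0}}e_{n,h}$ plus $O(k\|e_{n,h}\|_h)$ Lipschitz corrections, boundary-approximation terms absorbed via the CFL condition (\ref{cfl}), and the simplified local error; then discrete Gronwall for the first set of bounds, and an Abel summation by parts on $\sum_j\big(e^{k\bar J_{n-1,h,0}}\cdots e^{k\bar J_{j+1,h,0}}\big)\rho_{j,h}^{simp}$ combined with (\ref{parabol}), the refined bound on $\bar J_{n,h,0}^{-1}\rho_{n,h}^{simp}$ from Theorem \ref{thlocs}, and the extra factor of $k$ gained by consecutive differences through (\ref{regflesp1}) for the second set. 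The only cosmetic difference is that the paper makes the intermediate quantity $\bar{\bar U}_h^{n+1}$ (same approximated boundaries, exact $\gamma$) explicit and first proves the perturbation bound $\|\varphi_j(\tau J_{n,h,0})-\varphi_j(\tau\bar J_{n,h,0})\|_h=O(\tau\|e_{n,h}\|_h)$ via variation of constants, which you subsume into your Lipschitz estimates.
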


\begin{proof} For the sake of brevity, we will detail the proof just for the case $p=2$ and Dirichlet boundary conditions since the same methodology can be applied to prove the cases corresponding to Robin/Neumann boundary conditions and $p=1,3$. (We have not chosen the case $p=1$ because, in such a case, as the error when calculating the required boundaries does not contain a factor $1/h^\gamma$, condition (\ref{cfl}) is not required and thus the proof would not show its necessity in the other cases.)

We firstly notice that
\begin{eqnarray}
\|\varphi_j(\tau J_{n,h,0})-\varphi_j(\tau \bar{J}_{n,h,0})\|_h=O(\tau \|e_{n,h}\|_h), \quad j=0,1,2,\dots
\label{phijdif}
\end{eqnarray}
To prove that, we notice that, whatever the square matrices $B$ and $C$ are, for any vector $\alpha$, $\varphi_j(\tau B)\alpha$ and $\varphi_j(\tau C)\alpha$ are the solutions of the following differential problems
\begin{eqnarray}
&&\left\{\begin{array}{rcl}
\dot{V}(\tau)&=&(B-\frac{j}{\tau}I)V(\tau)+\frac{1}{(j-1)! \tau}\alpha \\
&=&(C-\frac{j}{\tau}I)V(\tau)+\frac{1}{(j-1)! \tau}\alpha+(B-C)V(\tau),  \\
V(0)&=&\frac{1}{j!}\alpha, \end{array}\right.\nonumber \\
&&\left\{\begin{array}{rcl}\dot{W}(\tau)&=&(C-\frac{j}{\tau}I)W(\tau)+\frac{1}{(j-1)! \tau}\alpha,  \\
W(0)&=&\frac{1}{j!}\alpha,\end{array}\right. \nonumber
\end{eqnarray}
where, when $j=0$, the term $\alpha/((j-1)!\tau)$ does not turn up.
Then, $(V-W)(\tau)$ satisfies
\begin{eqnarray}
\dot{\overbrace{(V-W)}}(\tau)&=&(C-\frac{j}{\tau}I)(V-W)(\tau)+(B-C) V(\tau), \nonumber \\
(V-W)(0)&=&0, \nonumber
\end{eqnarray}
from what
\begin{eqnarray}
(V-W)(\tau)&=&\int_0^\tau e^{(\tau-\sigma)C}(\frac{\sigma}{\tau})^j (B-C)V(\sigma) d\sigma
\nonumber \\
&=&\tau \int_0^1 \rho^j e^{\tau(1-\rho)C}(B-C) V(\tau \rho) d\rho. \nonumber
\end{eqnarray}
From this, taking $B=J_{n,h,0}$, $C=\bar{J}_{n,h,0}$ and considering (H1b) and that
$$\|J_{n,h,0}-\bar{J}_{n,h,0}\|_h=\|\mbox{diag}(\Psi'(U_h^n))-\mbox{diag}(\Psi'(P_h u(t_n)))\|_h \le C \|e_{n,h}\|_h,$$
(\ref{phijdif}) follows.

Now, we notice that
$e_{n+1,h}$ can be written as
\begin{eqnarray}
e_{n+1,h}=(U_h^{n+1}-\bar{\bar{U}}_h^{n+1})+(\bar{\bar{U}}_h^{n+1}-\bar{U}_h^{n+1})+\rho_{n,h}^{simp},
\label{for1}
\end{eqnarray}
where $\rho_{n,h}^{simp}$ is the local full discretization error in the previous subsection and
$\bar{\bar{U}}_h^{n+1}$ is obtained like $U_h^{n+1}$ where $U_h^n$ is substituted by $P_h u(t_n)$ in the role of $\gamma$ and the required boundary values are calculated in the same approximated way (that described in Subsection 4.2, which will be denoted by $\partial_a$). On the other hand, we remind that $\bar{U}_h^{n+1}$ is calculated as $U_h^{n+1}$ with $U_h^n$ substituted by $P_h u(t_n)$ in both the roles of $\gamma$ and $\alpha$ and the required boundary values are taken as the exact simplified ones. Then,
\begin{eqnarray}
U_h^{n+1}&-&\bar{\bar{U}}_h^{n+1} \nonumber \\
&=&[ e^{k J_{n,h,0}}-e^{k \bar{J}_{n,h,0}}]U_h^n \nonumber \\
 &&+k [\varphi_1(k J_{n,h,0})-\varphi_1(k\bar{J}_{n,h,0})][C_h \partial u(t_n)-D_h \partial \bar{J}(t_n) u(t_n)] \nonumber \\
&&+k^2 [\varphi_2(k J_{n,h,0})-\varphi_2(k\bar{J}_{n,h,0})][C_h \partial \bar{J}(t_n)u(t_n)-D_h \partial_a \bar{J}(t_n)^2 u(t_n)] \nonumber \\
&&+k^3 [\varphi_3(k J_{n,h,0})-\varphi_3(k\bar{J}_{n,h,0})]C_h \partial_a \bar{J}(t_n)^2 u(t_n) \nonumber \\
&&+k t_n \bigg[ [\varphi_1(k J_{n,h,0})-\varphi_1(k\bar{J}_{n,h,0})]P_h \dot{h}(t_n) \nonumber \\
&&\hspace{1cm} +k [\varphi_2(k J_{n,h,0})-\varphi_2(k\bar{J}_{n,h,0})][C_h \partial \dot{h}(t_n)-D_h \partial \bar{J}(t_n)\dot{h}(t_n)] \nonumber \\
&&\hspace{1cm} +k^2 [\varphi_3(k J_{n,h,0})-\varphi_3(k\bar{J}_{n,h,0})]C_h \partial \bar{J}(t_n) \dot{h}(t_n) \bigg] \nonumber \\
&&\hspace{1cm} +k \sum_{i=1}^s \sum_{l=1}^r \mu_{i,l} \bigg[ [\varphi_l(k J_{n,h,0})-\varphi_l(k\bar{J}_{n,h,0})]G_{n,i,h}
\nonumber \\
&&\hspace{2cm}+ \varphi_l(k \bar{J}_{n,h,0})[G_{n,i,h}-\bar{\bar{G}}_{n,i,h}] \nonumber \\
&&\hspace{2cm} +k [\varphi_{l+1}(k J_{n,h,0})-\varphi_{l+1}(k\bar{J}_{n,h,0})][C_h \partial \hat{\hat{\bar{G}}}_{n,i}-D_h \partial_a \bar{J}(t_n)\hat{\bar{G}}_{n,i}] \nonumber \\
&&\hspace{2cm} +k^2 [\varphi_{l+2}(k J_{n,h,0})-\varphi_{l+2}(k\bar{J}_{n,h,0})]C_h \partial_a \bar{J}(t_n)\hat{\bar{G}}_{n,i} \nonumber \\
&&\hspace{2cm} +k \big[[\varphi_{l+1}(k J_{n,h,0})-\varphi_{l+1}(k\bar{J}_{n,h,0})]P_h \dot{h}(t_n) \nonumber \\
&&\hspace{3cm} +k [\varphi_{l+2}(k J_{n,h,0})-\varphi_{l+2}(k\bar{J}_{n,h,0})]C_h \partial \dot{h}(t_n)
\big] \bigg], \nonumber
\end{eqnarray}
where, in order to bound $G_{n,i,h}-\bar{\bar{G}}_{n,i,h}$, we take into account that
\begin{eqnarray}
K_{n,i,h}&-&\bar{\bar{K}}_{n,i,h} \nonumber \\
&=& [e^{c_i k J_{n,h,0}}-e^{c_i k \bar{J}_{n,h,0}}]U_h^n \nonumber \\
&&+c_i k [\varphi_1(c_i k J_{n,h,0})-\varphi_1(c_i k \bar{J}_{n,h,0})][C_h \partial u(t_n)-D_h \partial \bar{J}(t_n) u(t_n)] \nonumber \\
&&+(c_i k)^2 [\varphi_2(c_i k J_{n,h,0})-\varphi_2(c_i k \bar{J}_{n,h,0})]C_h  \partial \bar{J}(t_n) u(t_n) \nonumber \\
&&+c_i k t_n \bigg[ [\varphi_1(c_i k J_{n,h,0})-\varphi_1(c_i k \bar{J}_{n,h,0})]P_h \dot{h}(t_n) \nonumber \\
&&\hspace{1.2cm}+c_i k [\varphi_2(c_i k J_{n,h,0})-\varphi_2(c_i k \bar{J}_{n,h,0})]C_h \partial \dot{h}(t_n)\bigg] \nonumber \\
&&+k \sum_{j=1}^{i-1} \sum_{l=1}^r \lambda_{i,j,l} \bigg[ [\varphi_{l}(c_i k J_{n,h,0})-\varphi_{1}(c_i k \bar{J}_{n,h,0})]G_{n,j,h} \nonumber \\
&&\hspace{2.8cm}+\varphi_l(c_i k \bar{J}_{n,h,0})[G_{n,j,h}-\bar{\bar{G}}_{n,j,h}] \nonumber \\
&& \hspace{2.8cm}+c_i k [\varphi_{l+1}(c_i k J_{n,h,0})-\varphi_{l+1}(c_i k \bar{J}_{n,h,0})]C_h \partial \hat{\bar{G}}_{n,j}  \nonumber \\
&& \hspace{2.8cm}+c_i k [\varphi_{l+1}(c_i k J_{n,h,0})-\varphi_{l+1}(c_i k \bar{J}_{n,h,0})]P_h \dot{h}(t_n)]\bigg]. \nonumber
\end{eqnarray}
From this, using (\ref{phijdif}), (H1c), (H2b) and that
\begin{eqnarray}
G_{n,i,h}&-&\bar{\bar{G}}_{n,i,h} \nonumber \\
&=&\Psi(K_{n,i,h})-\Psi(\bar{\bar{K}}_{n,i,h})+\mbox{diag}(\Psi'(P_h u(t_n))-\Psi'(U_h^n))K_{n,i,h}\nonumber \\
&&+\mbox{diag}(\Psi'(P_h u(t_n)))(\bar{\bar{K}}_{n,i,h}-K_{n,i,h}),
\nonumber
\end{eqnarray}
it is inductively proved that
$$\|K_{n,i,h}-\bar{\bar{K}}_{n,i,h}\|_h=O(k \|e_{n,h}\|_h), \quad \|G_{n,i,h}-\bar{\bar{G}}_{n,i,h}\|_h =O(\|e_{n,h}\|_h),$$
which implies that
\begin{eqnarray}
\|U_h^{n+1}-\bar{\bar{U}}_h^{n+1}\|_h=O(k \|e_{n,h}\|_h).
\label{for2}
\end{eqnarray}
On the other hand, using the remarks at the beginning of this subsection on the error committed when approximating the required simplified boundaries, it follows that
\begin{eqnarray}
\bar{\bar{U}}_h^{n+1}&-&\bar{U}_h^{n+1} \nonumber \\
&=&e^{k \bar{J}_{n,h,0}}(U_h^n-P_h u(t_n))+k^2 \varphi_2(k \bar{J}_{n,h,0})D_h O(\nu_h+\frac{\|e_{n,h}\|_h}{h^\gamma}) \nonumber \\
&& +k^3 \varphi_3(k \bar{J}_{n,h,0})C_h O(\nu_h+\frac{\|e_{n,h}\|_h}{h^\gamma}) \nonumber \\
&&+k \sum_{i=1}^s \sum_{l=1}^r \mu_{i,l}\bigg[ \varphi_l(k \bar{J}_{n,h,0})[\bar{\bar{G}}_{n,i,h}-\bar{G}_{n,i,h}]
\nonumber \\
&&\hspace{2.5cm}+k \varphi_{l+1}(k \bar{J}_{n,h,0})D_h O(\nu_h+\frac{\|e_{n,h}\|_h}{h^\gamma}) \nonumber \\
&&\hspace{2.5cm}+k^2 \varphi_{l+2}(k \bar{J}_{n,h,0})C_h O(\nu_h+\frac{\|e_{n,h}\|_h}{h^\gamma})\bigg], \label{ubar2}
\end{eqnarray}
where
\begin{eqnarray}
\bar{\bar{K}}_{n,i,h}-\bar{K}_{n,i,h}&=&e^{c_i k \bar{J}_{n,h,0}}(U_h^n -P_h u(t_n)) \nonumber \\
&&+k \sum_{j=1}^{i-1} \sum_{l=1}^r \lambda_{i,j,l} \varphi_l(c_i k \bar{J}_{n,h,0})[\bar{\bar{G}}_{n,j,h}-\bar{G}_{n,j,h}]. \nonumber
\end{eqnarray}
From this and (H1b), it is clear that
$$
\|\bar{\bar{K}}_{n,i,h}-\bar{K}_{n,i,h}\|_h=O(\|e_{n,h}\|_h), \quad \|\bar{\bar{G}}_{n,i,h}-\bar{G}_{n,i,h}\|_h =O(\|e_{n,h}\|_h),$$
which implies, using (\ref{ubar2}),(H1c) and (H2b), that
$$
\bar{\bar{U}}_h^{n+1}-\bar{U}_h^{n+1}= e^{k \bar{J}_{n,h,0}} e_{n,h}+O( k \|e_{n,h}\|_h+\frac{k^2}{h^\gamma}\|e_{n,h}\|_h+k^2 \nu_h).$$
Therefore, if $k/h^\gamma \le C$ for some constant $C$, considering also (\ref{for1}) and (\ref{for2}),
$$e_{n+1,h}=e^{k \bar{J}_{n,h,0}} e_{n,h}+O(k \|e_{n,h}\|_h+k^2 \nu_h)+\rho_{n,h}^{simp},$$
from what, when $e_{0,h}=0$ (i.e. the initial conditions are taken as the exact ones), using the bound of the successive powers $\|e^{k \bar{J}_{n,h,0}}\dots e^{k \bar{J}_{j+1,h,0}}\|_h$ ($j=0,\dots,n-1$), which is given by Lemma 3.6 in \cite{HOS},
\begin{eqnarray}
e_{n,h}=\sum_{j=0}^{n-1} O(k \|e_{j,h}\|_h+k^2 \nu_h)+\sum_{j=0}^{n-1} e^{k \bar{J}_{n-1,h,0}}\dots e^{k \bar{J}_{j+1,h,0}}\rho_{j,h}^{simp}.
\label{recure}
\end{eqnarray}
Considering then a discrete Gronwall lemma and the bound for $\rho_{n,h}^{simp}$ in Theorem \ref{thlocs}, it follows  that
$$
\|e_{n,h}\|_h=O(k^2+k \nu_h+\varepsilon_h),$$
and the result is proved for the first set of hypotheses.

To be more precise in the error bound under the second set of hypotheses, we can write the second sum in (\ref{recure}) as
\begin{eqnarray}
\big( \sum_{r=0}^{n-1} e^{k \bar{J}_{n-1,h,0}}\dots e^{k \bar{J}_{r+1,h,0}}\big) \rho_{0,h}^{simp}+\sum_{j=1}^{n-1} \big( \sum_{r=n-j}^{n-1} e^{k \bar{J}_{n-1,h,0}}\dots e^{k \bar{J}_{r+1,h,0}}\big) (\rho_{n-j,h}^{simp}-\rho_{n-j-1,h}^{simp}).
\label{dosfor}
\end{eqnarray}
Then,
\begin{eqnarray}
\big( \sum_{r=0}^{n-1} e^{k \bar{J}_{n-1,h,0}}\dots e^{k \bar{J}_{r+1,h,0}}\big) \rho_{0,h}^{simp}&=&
\big( \sum_{r=0}^{n-1} e^{k \bar{J}_{n-1,h,0}}\dots e^{k \bar{J}_{r+1,h,0}}\big)k \bar{J}_{0,h,0} \frac{1}{k}\bar{J}_{0,h,0}^{-1} \rho_{0,h}^{simp} \nonumber \\
&=&O(k^3+\eta_h+k \varepsilon_h),
\label{spp1}
\end{eqnarray}
using (\ref{parabol}) and the fact that $\|\bar{J}_{n,h,0}^{-1}\rho_{0,h}^{simp}\|_h=O(k^4+k \eta_h+k^2 \varepsilon_h)$ because of Theorem \ref{thlocfd}.
On the other hand,
\begin{eqnarray}
\lefteqn{\big( \sum_{r=n-j}^{n-1} e^{k \bar{J}_{n-1,h,0}}\dots e^{k \bar{J}_{r+1,h,0}}\big) (\rho_{n-j,h}^{simp}-\rho_{n-j-1,h}^{simp})} \nonumber \\
&=&\big( \sum_{r=n-j}^{n-1} e^{k \bar{J}_{n-1,h,0}}\dots e^{k \bar{J}_{r+1,h,0}}\big)k \bar{J}_{n-j,h,0} \frac{1}{k} \bar{J}_{n-j,h,0}^{-1} (\rho_{n-j,h}^{simp}-\rho_{n-j-1,h}^{simp}) \nonumber \\
&=&O(k^4+k \eta_h+k^2 \varepsilon_h),
\label{spp2}
\end{eqnarray}
using again (\ref{parabol}) and the fact that $\|\bar{J}_{n,h,0}^{-1}(\rho_{n-j,h}^{simp}-\rho_{n-j-1,h}^{simp})\|_h=O(k^5+k^2 \eta_h+k^3 \varepsilon_h)$ because of Theorem \ref{thlocfd} and the appropriate development of $\rho_{n-j-1,h}^{simp}$ around $t_{n-j}$ using (\ref{regflesp1}).

Inserting (\ref{spp1}) and (\ref{spp2}) in (\ref{dosfor}) and applying a discrete Gronwall lemma again to (\ref{recure}) with the corresponding bound for the last sum, it follows that
$$\|e_{n,h}\|_h=O(k^3+k \nu_h+k \varepsilon_h+\eta_h),$$
as the theorem states.

\end{proof}

\begin{remark}
Notice that condition (\ref{parabol}) when $\Psi\equiv 0$ reduces to formula (45) in \cite{acrnl} and was proved in \cite{HO0} for analytic semigroups. Here we need to include a time-dependent  component corresponding to $\Psi'(u(t))$ but, because of hypothesis (A6), we believe (\ref{parabol}) is very likely to be satisfied when (A2b) holds.
\end{remark}

On the other hand, we remark that, as it also happened with EERK methods \cite{cm_erk}, for $p=2,3$, a CFL condition (\ref{cfl}) is required. Nevertheless, that condition is much less restrictive than that needed with explicit Runge-Kutta methods.

\section{Numerical experiments}

In order to corroborate the previous results, we have numerically integrated the following nonlinear initial boundary value problem:
\begin{eqnarray}
u_t(x,t)&=&u_{xx}(x,t)+u^2(x,t)+h(x,t), \nonumber \\
u(x,0)&=&u_0(x), \nonumber \\
u(0,t)&=&g_0(t), \quad u(1,t)=g_1(t), \label{problem}
\end{eqnarray}
where $h$, $u_0$, $g_0$ and $g_1$ are taken so that the exact solution of the problem is $u(x,t)=\cos(x+t)$. This problem satisfies hypotheses (A1)-(A9) for $X=C([0,1])$ with the supremum norm, $Au=u_{xx}$ and $\bar{m}(A)=2$. In particular, (A2b) is satisfied and the rest of hypotheses, except for (A6), are justified as it was done in \cite{cm_erk}. On the other hand,  (A6) holds since the eigenfunctions and eigenvalues of $A_0$ are respectively $\{ \sin(k \pi x) \}_{k=1}^{\infty}$ and $\{-k^2 \pi^2\}_{k=1}^{\infty}$ and $\Psi'(u)=2 u$, (\ref{bj0}) is selfadjoint and its eigenvalues are all $<-\pi^2+2<0$ for all time values.

For the space discretization of this problem, we have taken the second-order symmetric difference scheme corresponding to
$$A_{h,0}=\frac{1}{h^2}\mbox{tridiag}(1,-2,1), \quad C_h[g_0,g_1]=\frac{1}{h^2}[g_0(t),\dots,g_1(t)], \quad D_h\equiv 0.$$
This discretization also satisfies hypotheses (H1)-(H3) for $Z=C^4[0,1]$, $\varepsilon_h=O(h^2)$, $\eta_h=O(h^2)$ and $\bar{m}(Z)=4$, as justified mostly in \cite{cm_erk}. Moreover, the eigenvalues of $A_{h,0}$ are $\{-\pi^2 k^2+O(h^2) \}_{k=1}^{N-1}$ if $N h=1$. Therefore, for small enough $h$, (H1) is satisfied for $A_{h,0}+2 U$ for $U$ near the exact solution, which takes values which are $\le 1$.

\subsection{Rosenbrock Euler method}

\begin{table}
\caption{Local and global error when integrating (\ref{problem}) through the standard method of lines with Rosenbrock Euler method (\ref{mlur}), $h=1/1000$}
\label{t2}
\begin{tabular}{ccccccc} \hline\noalign{\smallskip}
k & 1/5 & 1/10 & 1/20 & 1/40 & 1/80 & 1/160 \\ \noalign{\smallskip}\hline\noalign{\smallskip}
Local error & 1.9867e-2 & 4.9721e-3 & 1.2413e-3 & 3.0952e-4 & 7.7079e-5 & 1.9163e-5 \\
Order & &2.00 & 2.00 & 2.00 & 2.01 & 2.01\\
Global error & 1.2888e-2 & 2.9611e-3 & 7.0550e-4 & 1.7169e-4 & 4.2242e-5 & 1.0443e-5\\
Order & & 2.12& 2.07& 2.04 & 2.02 & 2.02\\ \noalign{\smallskip}\hline
\end{tabular}
\end{table}

\begin{table}
\caption{Local and global error when integrating (\ref{problem}) with the suggested modification of Rosenbrock Euler method corresponding to $p=2$ (\ref{mod2r}), $h=1/1000$}
\label{t3}
\begin{tabular}{ccccccc} \hline\noalign{\smallskip}
k & 1/5 & 1/10 & 1/20 & 1/40 & 1/80 & 1/160 \\ \noalign{\smallskip}\hline\noalign{\smallskip}
Local error & 1.4234e-3 & 1.8630e-4 & 2.3934e-5 & 3.0301e-6 & 3.7950e-7 & 4.7074e-8 \\
Order & & 2.93 & 2.96 & 2.98 & 3.00 & 3.01\\
Global error & 1.4909e-3 & 2.7772e-4 & 5.9836e-5 & 1.3866e-5 & 3.3252e-6 & 8.1219e-7 \\
Order & & 2.42 & 2.21 & 2.11 & 2.06 & 2.03  \\ \noalign{\smallskip}\hline
\end{tabular}
\end{table}

We have firstly considered the well-known Rosenbrock Euler method \cite{CO}, which just has $1$ stage (which coincides with the numerical solution at the previous time step) and, for which, $b_1(z)=\varphi_1(z)$. This method has classical order $2$ and stiff order $2$ for problems with nul boundary conditions according to \cite{HOS}. When applying the standard method of lines, the method must be implemented over the following space semidiscretization of the problem:
\begin{eqnarray}
\dot{U}_h(t)&=&A_{h,0} U_h(t)+C_h g(t)+U_h.^2+P_h h(t), \nonumber \\
U_h(0)&=&P_h u_0. \nonumber
\end{eqnarray}
Considering (\ref{un}), the method reads
\begin{eqnarray}
U_h^{n+1}&=&e^{k J_{n,h,0}}U_h^n \nonumber \\
&&+k \varphi_1(k J_{n,h,0})[\Psi(U_h^n)+P_h h(t_n)-\mbox{diag}(\Psi'(U_h^n))U_h^n+C_h \partial u(t_n)] \nonumber \\
&&+k^2 \varphi_2(k J_{n,h,0})[P_h \dot{h}(t_n)+C_h \partial \dot{u}(t_n)],
\label{mlur}
\end{eqnarray}
which can be seen to be equivalent to (\ref{Uhn}) when $p=1$ (This happens whenever $D_h\equiv 0$). Then, our analysis through Theorems \ref{thlocfd} and \ref{thglob} leads to local and global order $2$ in the timestepsize when the error in space is negligible. We can corroborate that in Table \ref{t2}. On the other hand, when implementing (\ref{Uhn}) with $p=2$, formulas (\ref{Uhn}) simplify to
\begin{eqnarray}
U_h^{n+1}&=&e^{k J_{n,h,0}}U_h^n \nonumber \\
&&+k \varphi_1(k J_{n,h,0})[\Psi(U_h^n)+P_h h(t_n)-\mbox{diag}(\Psi'(U_h^n))U_h^n+C_h \partial u(t_n)] \nonumber \\
&&+k^2 \varphi_2(k J_{n,h,0})[P_h \dot{h}(t_n)+C_h \partial \dot{u}(t_n)] \nonumber \\
&&+k^3 \varphi_3(k J_{n,h,0}) C_h \partial \ddot{u}(t_n). \label{mod2r}
\end{eqnarray}
We remark that, with this method, no numerical differentiation is required to avoid order reduction in the local error since it happens that some terms cancel and, in the final formula (\ref{mod2r}), just $\partial u(t)$, $\partial \dot{u}(t)$ and $\partial \ddot{u}(t)$ are necessary, which can be exactly calculated in terms of data $g$.

\begin{figure*}
\vspace{-6cm}
\centerline{\includegraphics[width=150mm]{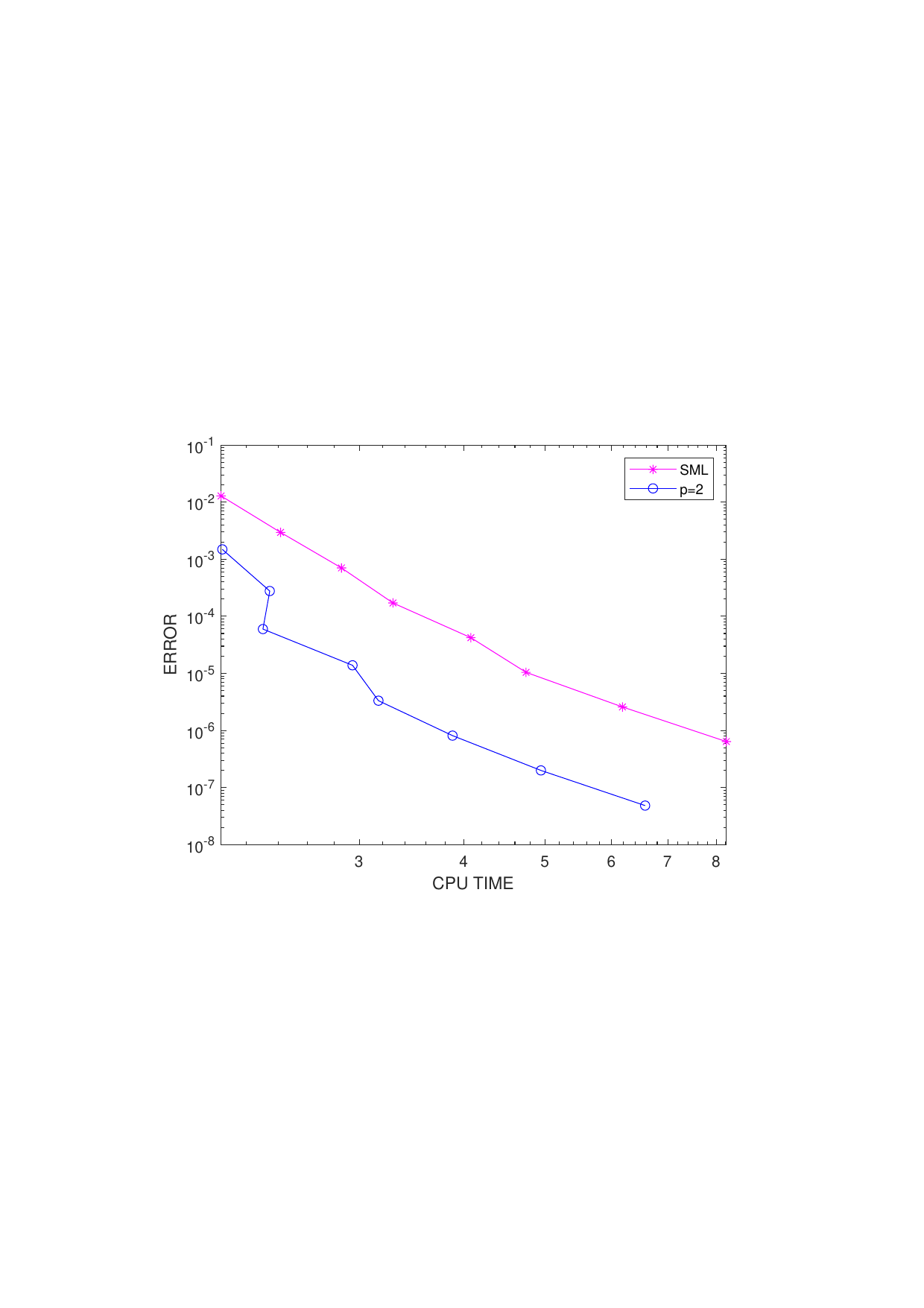}}
\vspace{-6cm}
\caption{Error against CPU time when integrating problem (\ref{problem}) with nonvanishing boundary conditions, using exponential Rosenbrock Euler method without avoiding order reduction (\ref{mlur})
(magenta, asterisks) and the suggested technique corresponding to $p=2$ (\ref{mod2r}) (blue, circles)} \label{fig}

\end{figure*}

We notice that, in (\ref{mod2r}), just the last term in $\varphi_3$ is added with respect to (\ref{mlur}). In such a way, we manage to obtain local order $3$, as justified again through Theorem \ref{thlocfd}, although the global order continues to be $2$ because the classical order is not greater than that. (Theorem \ref{thglob} can therefore just be applied under the first set of hypotheses.) The results are shown in Table \ref{t3}.

As for the size of the global error, we notice that, for a fixed stepsize, this is smaller in Table \ref{t3} than in Table \ref{t2}. In any case, what is important is the comparison in terms of computational time. Figure \ref{fig} shows that, when implementing (\ref{mlur}) and (\ref{mod2r}) through Krylov subroutines \cite{niesen} with tolerances $10^{-10}$, in order to obtain an error of the order $10^{-6}$, (\ref{mlur}) takes approximately twice more time than (\ref{mod2r}). (We notice that, for a fixed stepsize, not only the error is smaller with (\ref{mod2r}) but also the computational cost and the reason for that is given in \cite{CR1}).

\subsection{Third-order method}

The second method we have considered is a method with classical order 3 which correspond to the Butcher array
\begin{eqnarray}
\begin{array}{c|cc} 0 &  & \\ 1 & \varphi_{1} &  \\  \hline & \frac{1}{2} \varphi_1+\frac{1}{3} \varphi_2 & \frac{1}{2} \varphi_1-\frac{1}{3} \varphi_2 \end{array}. \label{orden3}
\end{eqnarray}
We have implemented the method by using the standard method of lines and the suggested technique with $p=1$, $p=2$ and $p=3$. As it can be seen in Table \ref{t4}, the local and global order using the standard method of lines is 2. When we apply the suggested technique with $p=1$, the method behaves very similarly, and we obtain again local and global order 2, as it can be observed in Table \ref{t5}. The computational cost with $p=1$ is also very similar to the standard method of lines, as  Figure \ref{fig2} shows. For $p=2$ and $p=3$,
no numerical differentiation is either required with this method as some terms on the boundary simplify. The results corresponding to the suggested technique with $p=2$ are written in Table \ref{t6}, where  local and global order 3 turn up. Finally,  Table \ref{t7} shows the results for the case $p=3$, where the local order is quite near 4 and the global order is 3, as justified through Theorems \ref{thlocs} and the first part of Theorem \ref{thglob}, because the classical order is 3. Comparing the results in Tables \ref{t6} and \ref{t7}, we can observe that, although the global order has not been increased, the errors have decreased and what is more, in Figure \ref{fig2} we can see that the computational cost is smaller with $p=3$ than with $p=2$.

\begin{table}
\caption{Local and global error when integrating (\ref{problem}) through the standard method of lines with method (\ref{orden3}), $h=1/1000$}
\label{t4}
\begin{tabular}{ccccccc} \hline\noalign{\smallskip}
k & 1/5 & 1/10 & 1/20 & 1/40 & 1/80 & 1/160 \\ \noalign{\smallskip}\hline\noalign{\smallskip}
Local error & 1.6538e-2 & 4.1377e-3 & 1.0325e-3 & 2.5726e-4 & 6.3993e-5 & 1.5885e-5 \\
Order & & 2.00 & 2.00 & 2.00 & 2.01  & 2.01   \\
Global error & 1.0732e-2 & 2.4644e-3 & 5.8670e-4 & 1.4262e-4 & 3.5035e-5 & 8.6423e-6 \\
Order &  & 2.12 & 2.07 & 2.04 & 2.02 & 2.02 \\ \noalign{\smallskip}\hline
\end{tabular}
\end{table}

\begin{table}
\caption{Local and global error when integrating (\ref{problem}) with method (\ref{orden3}) by using the suggested technique corresponding to $p=1$, $h=1/1000$}
\label{t5}
\begin{tabular}{ccccccc} \hline\noalign{\smallskip}
k & 1/5 & 1/10 & 1/20 & 1/40 & 1/80 & 1/160 \\ \noalign{\smallskip}\hline\noalign{\smallskip}
Local error & 1.9868e-2 & 4.9722e-3 & 1.2413e-3 & 3.0952e-4 & 7.7079e-5 & 1.9162e-5  \\
Order &  & 2.00 & 2.00 & 2.00 & 2.01 & 2.01 \\
Global error & 1.2889e-2 & 2.9616e-3 & 7.0570e-4 & 1.7176e-4 & 4.2261e-5 & 1.0448e-5  \\
Order &  & 2.12 & 2.07 & 2.04 & 2.02 & 2.02  \\ \noalign{\smallskip}\hline
\end{tabular}
\end{table}

\begin{table}
\caption{Local and global error when integrating (\ref{problem}) with method (\ref{orden3}) by using the suggested technique corresponding to $p=2$, $h=1/1000$}
\label{t6}
\begin{tabular}{ccccccc} \hline\noalign{\smallskip}
k & 1/5 & 1/10 & 1/20 & 1/40 & 1/80 & 1/160 \\ \noalign{\smallskip}\hline\noalign{\smallskip}
Local error & 1.1541e-3 & 1.4206e-4 & 1.7599e-5 & 2.1876e-6 & 2.7226e-7 & 3.3882e-8 \\
Order &  & 3.02 & 3.01 & 3.01 & 3.01 & 3.01 \\
Global error & 1.2795e-3 & 1.5619e-4 & 1.9252e-5 & 2.3939e-6 & 2.9987e-7 & 3.7630e-8 \\
Order &  & 3.03 & 3.02 & 3.01 & 3.00 & 2.99  \\ \noalign{\smallskip}\hline
\end{tabular}
\end{table}

\begin{table}
\caption{Local and global error when integrating (\ref{problem}) with method (\ref{orden3}) by using the suggested technique corresponding to $p=3$, $h=1/1000$}
\label{t7}
\begin{center}
\begin{tabular}{cccccc} \hline\noalign{\smallskip}
k & 1/5 & 1/10 & 1/20 & 1/40   \\ \noalign{\smallskip}\hline\noalign{\smallskip}
Local error & 6.6385e-5 & 4.1529e-6 & 2.9540e-7  & 2.6113e-8     \\
Order &  & 4.00 & 3.81 & 3.50   \\
Global error & 1.6776e-4 & 2.2109e-5 & 2.7844e-6 & 3.4929e-7    \\
Order &  & 2.92 & 2.99 & 2.99     \\ \noalign{\smallskip}\hline
\end{tabular}
\end{center}
\end{table}

\begin{figure*}
\vspace{-6cm}
\centerline{\includegraphics[width=150mm]{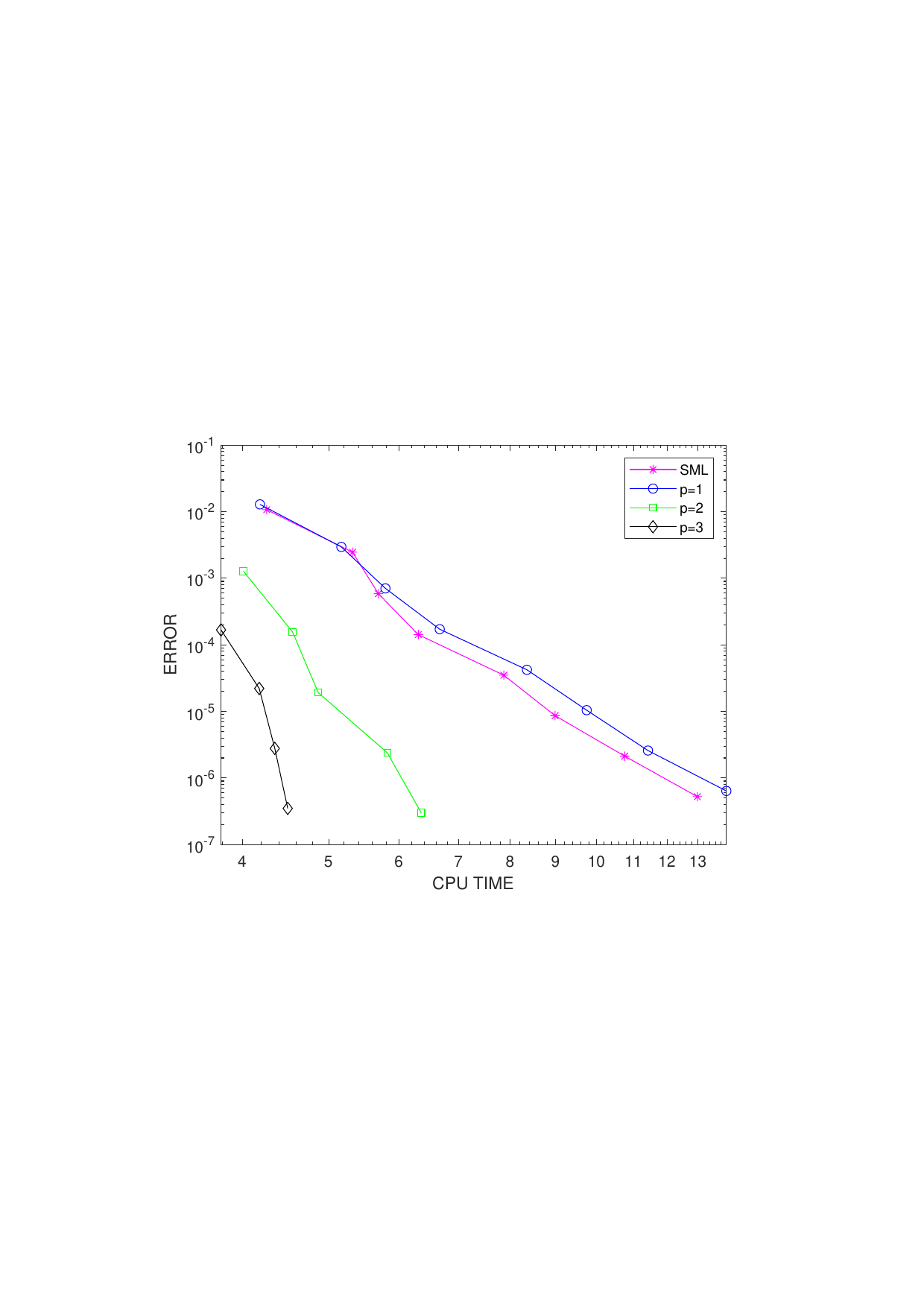}}
\vspace{-6cm}
\caption{Error against CPU time when integrating problem (\ref{problem}) with nonvanishing boundary conditions, using method (\ref{orden3}) with non-stiff order 3 without avoiding order reduction
(magenta, asterisks), the suggested technique corresponding to $p=1$ (blue, circles), $p=2$ (green, squares) and $p=3$ (black, diamonds)} \label{fig2}

\end{figure*}

\end{document}